\pdfoutput=1
\documentclass[letterpaper, 10 pt, conference]{ieeeconf}  

\IEEEoverridecommandlockouts                              


\usepackage{cite}
\usepackage{color}
\usepackage{mathrsfs}
\usepackage{subcaption}
\usepackage{verbatim}

\usepackage{graphicx}
\usepackage{bbold}


\usepackage{tabularx}

\usepackage[cmex10]{amsmath}
\usepackage{array}
\usepackage{algorithmic}
\usepackage{amsfonts}
\usepackage[ruled, vlined]{algorithm2e}
\usepackage{url}
\usepackage{enumerate}
\usepackage{amsthm}
\usepackage{subcaption}   
\interdisplaylinepenalty=2500

\renewcommand{\ell}{{o_{j,d}}}

\theoremstyle{plain}
\newtheorem{coro}{Corollary}[section]
\newtheorem{thm}{Theorem}[section]
\newtheorem{lem}{Lemma}[section] 
\theoremstyle{definition}
\newtheorem{defn}[thm]{Definition} 
\title{\LARGE \bf
Passive-Aggressive Learning and Control
}

\author{Dimitar Ho, Nikolai Matni and John C. Doyle
\thanks{The authors are with the department of Control and Dynamical Systems, California Institute of Technology, Pasadena, CA 91125, USA ({\tt\small \{dho,nmatni,doyle\}@caltech.edu}).}%
\thanks{Thanks to funding from AFOSR and NSF and gifts from Huawei and Google.}
}

\begin{document}
\maketitle
\thispagestyle{empty}
\pagestyle{empty}


\begin{abstract}
In this work, we investigate the problem of simultaneously learning and controlling a system subject to adversarial choices of disturbances and system parameters. We study the problem for a scalar system with $l_\infty$-norm bounded disturbances and  system parameters constrained to lie in a known bounded convex polytope. We present a controller that is globally stabilizing and gives continuously improving bounds on the worst case state deviation. The proposed controller simultaneously learns the system parameters and controls the system. The controller emerges naturally from an optimization problem, and balances exploration and exploitation in such a way that it is able to efficiently stabilize unstable and adversarial system dynamics. Specifically if the controller is faced with large uncertainty, the initial focus is on exploration, retrieving information about the system by applying state-feedback controllers with varying gains and signs. In a pre-specified bounded region around the origin, our control strategy can be seen as \textit{passive} in the sense that it learns very little information.  Only once the noise and/or system parameters act in an adversarial way, leading to the the state exiting the aforementioned region for more than one time-step, our proposed controller behaves \textit{aggressively} in that it is guaranteed to learn enough about the system to subsequently robustly stabilize it. We end by demonstrating the efficiency of our methods via numerical simulations.
\end{abstract}

\section{Introduction}
With the proliferation of big-data, and the success of machine-learning algorithms being applied to planning and control problems, there has been a renewed interest in combining and applying learning and control to continuous systems.  Modern results build on the foundational ideas of adaptive control \cite{AstolfiAdaptive,AstromAdaptive}, which we cannot hope to adequately survey here, but place an emphasis on finite-time, rather than asymptotic, guarantees of performance and stability.

To the best of our knowledge, recent results of this nature have focused on the stochastic setting wherein system parameters are unknown, and must be identified despite stochastic excitations to the system.  By combining concentration results from high-dimensional statistics with techniques from robust and optimal control, regret and performance bounds can be obtained as a function of the number of data points seen by the controller. Notable examples include \cite{AndersCDC,LearningLQR,Fietcher,Abbasi1,Abbasi2}, which provide varying degrees of guarantees and practically applicable algorithms.  However, as far as we are aware, no comparable results exist for the setting of bounded but adversarial process noise and parametric uncertainty.

Recently it has been shown that in the $l_\infty$ bounded adversarial setting, solutions to the state-estimation problem \cite{Nair1,Nair2} and the robust control problem subject to quantization and delay in the control loop \cite{Yorie} admit particularly intuitive and appealing forms.  This work shows that the same holds true for a joint learning and control problem.

Our main contribution is what we call a passive-aggressive learning and control algorithm that trades off between identifying the true system parameters and stabilizing the system.  The defining feature of this controller is that unless the system parameters and noise act in an adversarial way, pushing the state sufficiently far away from the origin, it is content with passively observing the state evolution and updating its uncertainty set.  However, when the system conspires to push the state sufficiently far from the origin, it aggressively learns the system parameters and applies control actions aimed at stabilizing the system.

The rest of the paper is organized as follows: in Section II we define the problem and the necessary notions of consistent parameter sets given a sequence of observations. In Section III we then show that in the case of ``strongly stabilizable'' initial parameter uncertainty sets, a simple static state-feedback policy is sufficient to guarantee robust stability for all possible choices of system realization.  We then build on this result in Section IV to show that if the controller updates the set of feasible system parameters with each observation, the controller performance can be strictly improved.  Finally, in Section V we consider the case of general initial uncertainty sets, and show that if a two-stage controller is applied, then the uncertainty set can eventually be reduced to one that is strongly stabilizable, allowing us to switch to the aforementioned control policies.  We demonstrate the efficacy of our approach in Section VI, and end with conclusions and future work in Section VII.

\section{System and Problem Definition}
\subsection{System Dynamics}
\noindent We consider the scalar linear discrete-time system 
\begin{align}
\label{eq:dyn} x_{n+1} &= a x_n + b u_n(x_{0:n}) + w_n\\
&\left|w_n\right| \leq \eta\,\,\,\begin{bmatrix}a\\b \end{bmatrix} \in \mathcal{P}_0
\end{align}
with the state $x_n$, the disturbance $w_n$ and the causal controllers $u_n(x_{0:n})$ where we use the notation $x_{0:n}$ to refer to the stacked vector $\left[x_0,x_1,\dots, x_n \right]^\mathrm{T}$. We assume that the disturbance $w_n$ is $l_\infty$ bounded by $\eta$, and that the state-space parameters $a$ and $b$ are unknown constants, but that are constrained to lie in a known bounded convex polytope $\mathcal{P}_0$. 

We furthermore assume that
\begin{align}
b\neq 0\,\, \forall \begin{bmatrix}a\\b \end{bmatrix} \in \mathcal{P}_0,
\end{align}
i.e., that the system is controllable for all possible realizations of the unknown state-space parameters $(a,b)$. To simplify notation, we will refer to the controllers as $u_n$, where the subscript reminds that $u_n$ is a function of $x_{0:n}$.

\subsection{Consistent Sets}
We denote by $x_{i:j}$ and $u_{i:j}$ the stacked vector of state values $x_n$ and control inputs $u_n$, respectively, for $i\leq n\leq j$.  It follows from the dynamics \eqref{eq:dyn} that at time $N$, the following entry-wise inequality must hold for the true parameters $(a,b)$.
\begin{align}
\left| x_{1:N}-\left[x_{0:N-1},u_{0:N-1}\right]\begin{bmatrix}a\\b \end{bmatrix}\right| \leq \mathbb{1} \eta,
\end{align}
where $\mathbb{1}$ is the all ones vector of compatible dimension.  This inequality therefore allows us to characterize the subset of the initial uncertainty set $\mathcal{P}_0$ that is consistent with the observed state and control input histories given the known bound $\eta$ on the magnitude of the disturbance process $w_n$.

This motivates the following definition of the \emph{consistent set} at time $N$:
\begin{align}
\notag&\mathcal{S}(x_{0:N},u_{0:N-1})\\
:=& \left\{\left. \begin{bmatrix}a\\b\end{bmatrix} \right|\left| x_{1:N}-\left[x_{0:N-1},u_{0:N-1}\right]\begin{bmatrix}a\\b \end{bmatrix}\right| \leq \mathbb{1} \eta\right\}.
\end{align}

It then follows that given state and control histories $x_{0:N}$, $u_{0:N-1}$ and the initial uncertainty set $P_0$, we have that $[a,b]^T$ lies in the bounded convex polytope $P_0 \cap \mathcal{S}(x_{0:N},u_{0:N-1})$.

For $N=1$, the set $\mathcal{S}(x_{0:1},u_{0})$ reduces to a slice of thickness $2\eta/\sqrt{x^2_0 +u^2_0}$ with normal vectors $\pm[x_0,u_0]^T$ in parameter space (see Fig.\ref{img:slice}). This motivates the following recursive definition of the consistent set 
$\mathcal{S}(x_{0:N},u_{0:N-1})$ at time $N$ as the intersection of $N$ such slices:
\begin{align}
\mathcal{S}(x_{0:N},u_{0:N-1}) = \bigcap\limits^{N-1}_{i=0}  \mathcal{S}(x_{i:i+1},u_{i}).
\end{align}

\begin{figure}[h]
\centering
\includegraphics[width=0.4\textwidth]{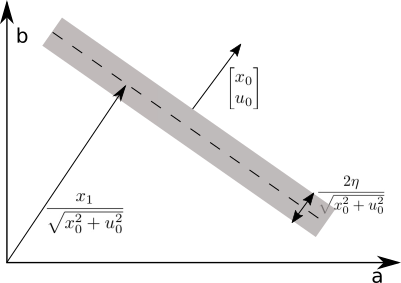}
\caption{Example of $S(x_{0:1},u_0)$}
\label{img:slice}
\end{figure}

\subsection{Problem Statement}\label{sec:prob}
Our objective is to find the best causal control strategy $u_k(x_{0:k},\mathcal{P}_0)$ that minimizes the worst-case state deviation $\left\|x_{1:\infty} \right\|_{\infty}$ despite  adversarial noise $w_{0:\infty}$ and system parameter choices $[a,b] \in \mathcal{P}_0$.

Formally, we seek a solution to the following infinite-horizon min-max problem 
\begin{equation}
V\left(x_0,\mathcal{P}_0\right)
\begin{array}{rl}\\
:= \min\limits_{u_{0:\infty}} Q^{1:\infty}(x_0,\mathcal{P}_0,u_{0:\infty})
\end{array}
\label{eq:problem}
\end{equation}
where we define $Q^{1:N}(x_0,\mathcal{P}_0,u_{0:{N-1}})$ as
\begin{align}
\notag &Q^{1:N}(x_0,\mathcal{P}_0,u_{0:{N-1}})\\
:=& \begin{array}{rl}
\max\limits_{\left[\begin{smallmatrix} a\\b\end{smallmatrix}\right]\in \mathcal{P}_0}  \max\limits_{\left\|w_{0:N-1}\right\|_{\infty}\leq \eta} & \left\| x_{1:N} \right\|_{\infty} \\
\text{s.t.} & \text{dynamics \eqref{eq:dyn}.} 
\end{array}\\
\label{eq:QR}= &\max\limits_{x_{1:N} \in \mathcal{R}^N_\eta(x_0,u_{0:N},\mathcal{P}_0)} \left\|x_{1:N} \right\|_{\infty},
\end{align}
where \eqref{eq:QR} is an equivalent reformulation using the following definition.
\begin{defn}\label{def:Rn}
Define the N-step reachable set $\mathcal{R}^N_{\eta}(x_0,u_{0:\infty},\mathcal{P}_0)$ as the set of possible trajectories $x_{1:N}$, given the initial condition $x_0$, the initial uncertainty set $\mathcal{P}_0$, the controllers $u_{0:\infty}$ and the disturbance bound $\eta$, i.e.
\begin{align}
&\mathcal{R}^N_\eta(x_0,u_{0:N},\mathcal{P}_0)\\
 =& \left\{ v_{1:N}\left| \begin{array}{c} \exists \begin{bmatrix} a\\b\end{bmatrix} \in \mathcal{P}_0, \exists |w_n| \leq \eta, s.t\\
  v_0 = x_0,\,\,v_{n+1} = av_{n}+bu_{n}(v_{0:n}) + w_n \end{array} \right.\right\}
\end{align}
\end{defn}

Our approach is to begin with what we term \emph{strongly stabilizable} initial uncertainty sets $\mathcal{P}_0$, that is to say initial uncertainty sets for which an appropriately chosen static state-feedback gain is guaranteed to be stabilizing for all realizations of the system parameters $(a,b)$.  We show that such a policy is optimal for optimization problem \eqref{eq:problem} restricted to static memoryless control policies. We then show that by adding an adaptive element to such a control policy, the performance can be further improved, and that an exploration/exploitation strategy naturally emerges.  Finally, we tackle the case of general initial uncertainty sets and show that after an initial ``passive'' learning phase, the uncertainty set is eventually ``aggressively'' reduced to one that is strongly stabilizable, allowing for our previously derived adaptive strategy to be applied.


\section{Robust Static State-Feedback for Strongly Stabilizable Uncertainty Sets}

In this section, we consider a restriction of optimization problem \eqref{eq:problem} to static state-feedback control policies, i.e., we restrict $u_n = kx_n$ for all $n\geq 0$.  The resulting optimization problem then reads as
\begin{align}
\noindent &V_{RSF}\left(x_0,\mathcal{P}_0\right) =\begin{array}{rl} \min\limits_{k} & Q^{1:\infty}\left(x_0,\mathcal{P}_0,u_{0:\infty}\right) \\
\text{s.t.} & u_n = kx_n,\forall n\in \mathbb{N}\end{array}
\label{eq:sf_problem}
\end{align}


As we are restricting ourselves to static state-feedback polices, it follows that $V_{RSF}(x_0,\mathcal{P}_0)$ is an upper bound for  $V(x_0,\mathcal{P}_0)$, i.e.:
\begin{align}
 V(x_0,\mathcal{P}_0) \leq V_{RSF}(x_0,\mathcal{P}_0).
\end{align}

We consider this simpler problem as it has several appealing properties.  First,  $V_{RSF}\left(x_0,\mathcal{P}_0\right)$ and the corresponding minimizing $k^*$ can be solved for in closed form.  Further it motivates the definition of \emph{strongly stabilizable} initial uncertainty sets $\mathcal{P}_0$, which naturally captures how difficult an uncertain system can be to stabilize.  To that end, we introduce the following measure of stabilizability for an uncertainty set $\mathcal{P}$.
\begin{defn}\label{def:lamb}
We use $\lambda(\mathcal{P})$ to denote the \textit{stability margin} of the parameter set $\mathcal{P}$, and define it as
\begin{align}
\lambda(\mathcal{P}) &:= \min_{k} \max_{\left[\begin{smallmatrix} a\\b\end{smallmatrix}\right]\in \mathcal{P}} \left|a+bk\right|.
\end{align}
Furthermore, we call the corresponding minimizer
\begin{align}
K(\mathcal{P}) = \text{argmin}_{k} \max_{\left[\begin{smallmatrix} a\\b\end{smallmatrix}\right]\in \mathcal{P}} \left|a+bk\right|
\end{align}
the \textit{gain} of the parameter set $\mathcal{P}$.
\end{defn}

The stability margin of a set $\lambda(\mathcal{P})$ is a functional mapping sets to $\mathbb{R}^{+}_{0}$ and describes the smallest system eigenvalue achievable by a constant state-feedback, assuming worst case parameter choice of $\left[a,b \right]^T \in \mathcal{P}$. As presented in the following Lemma, it follows that if the initial uncertainty set $\mathcal{P}_0$ satisfies
$\lambda(\mathcal{P}_0) <1$, then we can use $k = K(\mathcal{P}_0)$ as a state-feedback control law to stabilize the system for all parameters in $\mathcal{P}_0$.  We will refer to such initial uncertainty sets $\mathcal{P}_0$ as \textit{strongly stabilizable}.

In the following we let $a \vee b:=\max\left\{a, b\right\}$ to help simplify notation.
\begin{lem}[Comparison-Lemma] \label{lem:comp}
Let $v_n$ be a non-negative sequence in $\mathbb{R}$ that satisfies $v_{n+1} \leq f_n(v_{n})$ with non-decreasing functions $f_n$, then $v_n$ is bounded above by the sequence $\gamma_n$, where $\gamma_0 = v_0$ and $\gamma_{n+1} = f_n(\gamma_n)$.
\end{lem}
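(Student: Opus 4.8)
The plan is to prove the bound $v_n \leq \gamma_n$ for all $n \geq 0$ by induction on $n$, exploiting the monotonicity of the functions $f_n$ to propagate the inequality through each step of the recursion.

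For the base case I would observe that the claim holds with equality at $n=0$, since by definition $\gamma_0 = v_0$, so trivially $v_0 \leq \gamma_0$.

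For the inductive step I would assume $v_n \leq \gamma_n$ and aim to establish $v_{n+1} \leq \gamma_{n+1}$. The key observation is that $f_n$ is non-decreasing, so applying it to both sides of the induction hypothesis preserves the order, giving $f_n(v_n) \leq f_n(\gamma_n)$. Chaining this with the defining inequality $v_{n+1} \leq f_n(v_n)$ and the defining recursion $\gamma_{n+1} = f_n(\gamma_n)$ yields
\begin{align}
v_{n+1} \leq f_n(v_n) \leq f_n(\gamma_n) = \gamma_{n+1},
\end{align}
which closes the induction and establishes the bound for all $n$.

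There is no serious obstacle in this argument — it is an elementary monotone-induction estimate. The only point that genuinely requires care is the use of the non-decreasing hypothesis on $f_n$: without it, applying $f_n$ to the induction hypothesis could reverse the inequality and the argument would collapse. I would therefore make sure to invoke monotonicity explicitly at exactly the step $f_n(v_n) \leq f_n(\gamma_n)$, as this is the sole place where the structural assumption on $f_n$ is consumed.
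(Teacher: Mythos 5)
Your proof is correct and is essentially identical to the paper's own argument: both proceed by induction from $\gamma_0 = v_0$ via the chain $v_{n+1} \leq f_n(v_n) \leq f_n(\gamma_n) = \gamma_{n+1}$, with monotonicity of $f_n$ used at exactly the middle inequality. Your write-up is in fact slightly more careful than the paper's one-line version, since you explicitly flag where the non-decreasing hypothesis is consumed.
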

\begin{proof}
The result follows by induction from $\gamma_0 \leq v_0$ and $v_{n+1} \leq f_n(v_{n}) \leq f_n(\gamma_n) = \gamma_{n+1}$.
\end{proof}

\begin{lem}\label{lem:RSF} If the initial uncertainty set $\mathcal{P}_0$ is strongly stabilizable (i.e., if $\lambda(\mathcal{P}_0)<1$), then the problem \eqref{eq:sf_problem} attains its optimum with the controller $u^{RSF}_n = K(\mathcal{P}_0)x_n$ and 
\begin{equation}
V_{RSF}(x_0,\mathcal{P}_0)  =\left(\lambda(\mathcal{P}_0)|x_0| + \eta \right)\vee\left( \frac{\eta}{1-\lambda(\mathcal{P}_0)}\right).
\label{eq:sf_cost}
\end{equation}
Conversely, if the initial uncertainty set $\mathcal{P}_0$ is not strongly stabilizable (i.e., if $\lambda(\mathcal{P}_0)\geq 1$), then for any choice of $k$, it holds that $V_{RSF}(x_0,\mathcal{P}_0) = \infty$.
\end{lem}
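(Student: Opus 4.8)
The plan is to exploit the fact that, under a fixed static gain $u_n = kx_n$, the closed loop collapses to the scalar recursion $x_{n+1} = (a+bk)x_n + w_n$, so the adversarial parameter choice $[a,b]^T \in \mathcal{P}_0$ enters only through the scalar $\alpha := a+bk$. Writing $\bar\alpha(k) := \max_{[a,b]^T \in \mathcal{P}_0}|a+bk|$, I would first establish the upper bound. From $|x_{n+1}| \leq |\alpha|\,|x_n| + \eta \leq \bar\alpha(k)|x_n| + \eta$ and the monotonicity of $v \mapsto \bar\alpha(k)v + \eta$, Lemma~\ref{lem:comp} gives $|x_n| \leq \gamma_n$ with $\gamma_0 = |x_0|$ and $\gamma_{n+1} = \bar\alpha(k)\gamma_n + \eta$. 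Solving this linear recursion (assuming $\bar\alpha(k) < 1$) yields $\gamma_n = \bar\alpha(k)^n|x_0| + \eta\,\frac{1-\bar\alpha(k)^n}{1-\bar\alpha(k)}$, a sequence monotone in $n$ converging to the fixed point $\eta/(1-\bar\alpha(k))$. A short case analysis then shows $\sup_{n\geq 1}\gamma_n = (\bar\alpha(k)|x_0|+\eta) \vee \frac{\eta}{1-\bar\alpha(k)}$, with the first term dominating exactly when $|x_0| \geq \eta/(1-\bar\alpha(k))$ and the fixed point dominating otherwise.

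Next I would prove this bound is tight, which I expect to be the crux of the argument: the subtlety is that the adversary commits to a single $[a,b]^T$ for all time yet must realize the worst case across the entire trajectory. I would pick $[a,b]^T$ attaining $|a+bk| = \bar\alpha(k)$ and then choose disturbances greedily as $w_n = \eta\,\mathrm{sign}(\alpha x_n)$, so that $|x_{n+1}| = \bar\alpha(k)|x_n| + \eta$ holds with equality at every step; by induction $|x_n| = \gamma_n$, whence $\sup_{n\geq1}|x_n|$ equals the claimed value --- attained at $n=1$ when $|x_0|$ is large, and approached along the increasing sequence $\gamma_n \uparrow \eta/(1-\bar\alpha(k))$ otherwise. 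Only $|\alpha|$ matters here, since the sign of $\alpha$ is absorbed into the sign choice of $w_n$, and any parameter with smaller $|\alpha|$ only decreases the cost, confirming $\bar\alpha(k)$ is the adversary's best response.

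Having computed $Q^{1:\infty}(x_0,\mathcal{P}_0,u_{0:\infty}) = (\bar\alpha(k)|x_0|+\eta)\vee\frac{\eta}{1-\bar\alpha(k)}$ for each fixed $k$, the minimization over $k$ becomes immediate. Both terms are non-decreasing in $\bar\alpha(k)$ on $[0,1)$, so their maximum is a non-decreasing function of $\bar\alpha(k)$, and minimizing over $k$ is therefore equivalent to minimizing $\bar\alpha(k)$. By Definition~\ref{def:lamb} this minimum equals $\lambda(\mathcal{P}_0)$ and is attained at $k = K(\mathcal{P}_0)$, which yields \eqref{eq:sf_cost} and identifies the optimal controller $u^{RSF}_n = K(\mathcal{P}_0)x_n$.

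Finally, for the converse, suppose $\lambda(\mathcal{P}_0) \geq 1$. Then for every $k$ we have $\bar\alpha(k) \geq \lambda(\mathcal{P}_0) \geq 1$, so the adversary may select $[a,b]^T$ with $|a+bk| \geq 1$ and again take $w_n = \eta\,\mathrm{sign}(\alpha x_n)$, giving $|x_{n+1}| = |\alpha|\,|x_n| + \eta \geq |x_n| + \eta$. Telescoping yields $|x_n| \geq |x_0| + n\eta \to \infty$, so $\|x_{1:\infty}\|_\infty = \infty$ for every choice of $k$, establishing $V_{RSF}(x_0,\mathcal{P}_0) = \infty$.
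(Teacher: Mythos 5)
Your proposal is correct and follows essentially the same route as the paper's proof: reduce to the scalar recursion in $|a+bk|$, apply the comparison lemma to get the geometric bound $\gamma_n$, argue tightness via a disturbance sequence achieving $|x_n|=\gamma_n$, and conclude by monotonicity of the cost in $|a+bk|$, with the same divergence argument for $\lambda(\mathcal{P}_0)\geq 1$. The only difference is that you make explicit two steps the paper leaves implicit, namely the greedy choice $w_n = \eta\,\mathrm{sign}(\alpha x_n)$ and the telescoping bound $|x_n|\geq |x_0|+n\eta$ in the converse case.
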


\begin{proof}
Fix $u_n = kx_n$ for some $k$ and consider the case $\lambda(\mathcal{P}_0) \geq 1$. By definition \eqref{def:lamb}, $\exists \left[a^*,b^*\right]^\mathrm{T} \in \mathcal{P}_0$ such that $\left| a^*+b^*k\right| \geq 1$ and it is easy to see that $\forall x_0 \in \mathbb{R}$ we can find $w_{0:\infty}$ to make $x_n$ grow unbounded. Therefore, we conclude $V_{RSF}(x_0,\mathcal{P}_0) = \infty$ if $\lambda( \mathcal{P}_0) \geq 1$.\\
For the second part of the proof, notice that $\left|x_{n+1}\right| \leq \left|(a+bk)\right|\left|x_n\right| + \eta$
and by the comparison lemma \eqref{lem:comp}, we obtain the bound 
\begin{align}
\left|x_n \right| \leq \gamma_{n} = \left|a+bk\right|^n\left|x_0\right| + \frac{\left|a+bk\right|^n-1}{\left|a+bk\right|-1}\eta
\end{align} 
Now notice that $\gamma_n$ is monotonic and if $\lambda(\mathcal{P}_0) < 1$ we have that $\left\|\gamma_{1:\infty}\right\|_{\infty}$ is bounded and it takes either the value $|\gamma_1|$ or $\lim_{n\rightarrow \infty}|\gamma_n|$. Therefore
\begin{align}\label{eq:gamrsf}
\left\|\gamma_{1:\infty}\right\|_{\infty} = \left(\left|a+bk\right||x_0| + \eta \right)\vee\left( \frac{\eta}{1-\left|a+bk\right|}\right)
\end{align} 
Furthermore, notice that for each $n$ and fix $a, b$ and $k$, we can construct a sequence $w_{0:\infty}$ such that $|x_n| = \gamma_n$. Hence, we can rewrite the optimization problem \eqref{eq:sf_problem} equivalently as
\begin{align*}
&V_{RSF}(x_0,\mathcal{P}_0)\\
= &\min\limits_{k} \max\limits_{\left[\begin{smallmatrix} a\\b\end{smallmatrix}\right]\in \mathcal{P}_0} \left(\left|a+bk\right||x_0| + \eta \right)\vee\left( \frac{\eta}{1-\left|a+bk\right|}\right)
\end{align*}
We can now conclude the desired statement \eqref{eq:sf_cost}, by noticing that expression \eqref{eq:gamrsf} is monotonic in $|a+bk|$ and therefore we obtain the optimum value at $k^* = K(\mathcal{P}_0)$.
\end{proof}
Following the arguments of the proof of Lemma \eqref{lem:RSF}, we also obtain the following performance bound of the \textit{robust state-feedback} (RSF) controller $u_n = K(\mathcal{P}_0)x_n$:
\begin{coro}[RSF bound]
Assume the dynamics \eqref{eq:dyn} with $\lambda\left( \mathcal{P}_0\right)<1$ and $u_n = K\left( \mathcal{P}_0\right)x_n$, then $\left| x_n\right| \leq \gamma^{RSF}_n$, where 
\begin{align}
\gamma^{RSF}_n  =  \lambda\left( \mathcal{P}_0\right)^n\left|x_0\right| + \frac{\lambda\left( \mathcal{P}_0\right)^n-1}{\lambda\left( \mathcal{P}_0\right)-1}\eta 
\end{align}
\end{coro}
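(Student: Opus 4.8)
The plan is to recognize that this corollary is an immediate specialization of the intermediate bound already established in the proof of Lemma~\ref{lem:RSF}; the only real work is turning that parameter-dependent bound into a single guarantee valid uniformly over every admissible realization $(a,b)\in\mathcal{P}_0$, which I would do by combining the defining optimality of $K(\mathcal{P}_0)$ with a monotonicity argument. Concretely, I would first substitute the RSF controller $u_n = K(\mathcal{P}_0)x_n$ into the dynamics \eqref{eq:dyn} to get the closed-loop recursion $x_{n+1} = (a+bK(\mathcal{P}_0))x_n + w_n$, then take absolute values and use $|w_n|\leq\eta$ to obtain the scalar inequality
\begin{align}
|x_{n+1}| \leq |a+bK(\mathcal{P}_0)|\,|x_n| + \eta.
\end{align}
Since $(a,b)\in\mathcal{P}_0$ and $K(\mathcal{P}_0)$ is by Definition~\ref{def:lamb} the minimizer realizing $\lambda(\mathcal{P}_0) = \max_{[a,b]^\mathrm{T}\in\mathcal{P}_0}|a+bK(\mathcal{P}_0)|$, the true closed-loop gain obeys $|a+bK(\mathcal{P}_0)| \leq \lambda(\mathcal{P}_0)$, which upgrades the inequality to $|x_{n+1}| \leq \lambda(\mathcal{P}_0)|x_n| + \eta$.

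Next I would apply the Comparison-Lemma (Lemma~\ref{lem:comp}) with the non-decreasing affine maps $f_n(v) = \lambda(\mathcal{P}_0)v + \eta$, yielding $|x_n|\leq\gamma_n$ for the comparison sequence $\gamma_0 = |x_0|$, $\gamma_{n+1} = \lambda(\mathcal{P}_0)\gamma_n + \eta$. Solving this linear recursion via the geometric series recovers exactly $\gamma^{RSF}_n = \lambda(\mathcal{P}_0)^n|x_0| + \frac{\lambda(\mathcal{P}_0)^n-1}{\lambda(\mathcal{P}_0)-1}\eta$. Equivalently, and more economically, I would simply reuse the bound $\gamma_n = |a+bk|^n|x_0| + \frac{|a+bk|^n-1}{|a+bk|-1}\eta$ already derived in the proof of Lemma~\ref{lem:RSF}, set $k = K(\mathcal{P}_0)$, and invoke the monotonicity argument below to pass to the worst case.

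The one step requiring care — and what I expect to be the main, though mild, obstacle — is justifying that replacing $|a+bK(\mathcal{P}_0)|$ by its worst-case value $\lambda(\mathcal{P}_0)$ truly produces an \emph{upper} bound on the trajectory, since the true $(a,b)$ is unknown and may have a smaller closed-loop gain. This is settled by noting that $\gamma_n$ is monotonically increasing in the gain magnitude: writing $\frac{r^n-1}{r-1} = \sum_{i=0}^{n-1}r^i$ shows that both the $r^n|x_0|$ term and the noise-accumulation term increase in $r\in[0,1)$. Hence the bound attained for the true realization is dominated by its value at $r=\lambda(\mathcal{P}_0)$, so $\gamma^{RSF}_n$ holds uniformly over all $(a,b)\in\mathcal{P}_0$ without requiring knowledge of the true parameters, which is precisely the statement of the corollary.
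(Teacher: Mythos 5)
Your proof is correct and follows essentially the same route as the paper, which itself only gestures at "the arguments of the proof of Lemma~\ref{lem:RSF}": closed-loop contraction $|x_{n+1}|\leq|a+bK(\mathcal{P}_0)|\,|x_n|+\eta$, the bound $|a+bK(\mathcal{P}_0)|\leq\lambda(\mathcal{P}_0)$ from Definition~\ref{def:lamb}, and the Comparison-Lemma~\ref{lem:comp} with the geometric-series solution. Your primary variant (applying the worst-case gain before invoking the comparison lemma, rather than after via monotonicity in $|a+bk|$) is an immaterial reordering, and your "more economical" alternative is precisely the paper's argument.
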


Although the RSF controller is guaranteed to stabilize systems with strongly stabilizable uncertainty sets $\mathcal{P}_0$ and  \eqref{eq:sf_cost} provides a finite upper bound to $V(x_0,\mathcal{P}_0)$, it does so in an inefficient way. In particular, it does not update its control policy to reflect the fact that with each observation, more information about the underlying true parameters is revealed. As previously discussed, the observations $x_{0:N}, u_{0:N-1}$ allow us to reduce the space of consistent parameters $[a,b]^T$ to be $P_0\cap S(x_{0:N},u_{0:N-1})$. 
In what follows we improve upon the static state-feedback policy results of this section, and ultimately show that learning is necessary to compute stabilizing controllers for general initial uncertainty sets $\mathcal{P}_0$.

\section{Robust Adaptive State-Feedback for Strongly Stabilizable Uncertainty Sets}\label{sec:ARSF}

Keeping our focus on strongly stabilizable initial uncertainty sets, we propose two controllers that strictly outperform the static state-feedback policy $k=K(\mathcal{P}_0)$ defined in the previous section. The following adaptive schemes, which we call \textit{weakly adaptive RSF} (WRSF) and \textit{strongly adaptive RSF} (SRSF), simultaneously learn the system dynamics while controlling the system. The latter algorithm decides at every time-step $n$ between a control action that reduces $\left|x_{n+1} \right|$ and an exploratory control action that leads to more information about the system parameters. Our key result is a decomposition theorem that exploits the fact that the control policy at time $n$ is allowed to be a function of all past state-measurements $x_{0:n}$.  
\subsection{Decomposition and Properties of $V(x_0,\mathcal{P}_0)$}
Using the definition \eqref{eq:QR} and Lemma \eqref{lem:Rn}, it is easy to derive the following properties of $V(x_0,\mathcal{P}_0)$:
\begin{lem}\label{lem:Rn}
$\forall \alpha >0$ holds 
\begin{align}
\mathcal{R}^N_\eta(\alpha x_0,\left\{u_n\right\},\mathcal{P}_0) = \alpha \mathcal{R}^N_{\eta/\alpha}( x_0,\left\{1/\alpha u_n(\alpha x_{0:N})\right\},\mathcal{P}_0)
\end{align}
\end{lem}
\begin{proof}
This follows by change of variables and using definition \eqref{def:Rn}.
\end{proof}
\begin{coro}\label{cor:Vsym}
$\forall x_0$ holds $V(x_0,\mathcal{P}_0) = V(-x_0,\mathcal{P}_0)$
\end{coro}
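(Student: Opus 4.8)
The plan is to exhibit an explicit, norm-preserving \emph{reflection} symmetry of the problem and to show that the minimization over causal controllers is invariant under it. The dynamics \eqref{eq:dyn} are odd in the triple $(x_n, u_n, w_n)$ for fixed $(a,b)$, so negating the initial condition ought to map every admissible play to an admissible play of equal cost. Concretely, I would first define the reflection map on causal controllers by $\tilde u_n(y_{0:n}) := -u_n(-y_{0:n})$, and observe that this map is an involution that sends admissible causal controllers to admissible causal controllers, since $\tilde u_n$ still depends only on $y_{0:n}$ and so remains causal.

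Second, I would establish the trajectory correspondence by induction: for a fixed $[a,b]^\mathrm{T} \in \mathcal{P}_0$ and disturbance $w_{0:\infty}$, the trajectory $y_{0:\infty}$ generated from $y_0 = -x_0$ under controller $\{\tilde u_n\}$ and disturbance $-w_{0:\infty}$ satisfies $y_n = -x_n$ for all $n$, where $x_{0:\infty}$ is the trajectory from $x_0$ under $\{u_n\}$ and $w_{0:\infty}$. The induction step is immediate: assuming $y_{0:n} = -x_{0:n}$, the dynamics give $y_{n+1} = a y_n + b\tilde u_n(y_{0:n}) - w_n = -(a x_n + b u_n(x_{0:n}) + w_n) = -x_{n+1}$. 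At the level of Definition \ref{def:Rn}, this correspondence reads $\mathcal{R}^N_\eta(-x_0, \{\tilde u_n\}, \mathcal{P}_0) = -\mathcal{R}^N_\eta(x_0, \{u_n\}, \mathcal{P}_0)$, obtained directly from the change of variables $v \mapsto -v$, $w \mapsto -w$.

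Third, since $w \mapsto -w$ is a bijection of the disturbance ball $\{\|w_{0:\infty}\|_\infty \leq \eta\}$ and $\|\cdot\|_\infty$ is invariant under negation, taking the maximum over $(a,b)$ and $w$ in \eqref{eq:QR} yields $Q^{1:\infty}(-x_0, \mathcal{P}_0, \{\tilde u_n\}) = Q^{1:\infty}(x_0, \mathcal{P}_0, \{u_n\})$. Finally, because $\{u_n\} \mapsto \{\tilde u_n\}$ is a bijection on the set of admissible controllers, minimizing both sides over all controllers gives $V(-x_0, \mathcal{P}_0) = V(x_0, \mathcal{P}_0)$, as desired.

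The only subtlety, and the reason Lemma \ref{lem:Rn} cannot be invoked directly, is that the scaling lemma is stated only for $\alpha > 0$, so the sign flip corresponding to $\alpha = -1$ must be handled by the separate reflection argument above rather than by a literal substitution. The main care needed is bookkeeping: ensuring the involution acts correctly on the controllers so that the \emph{outer} minimization over $\{u_n\}$ is genuinely preserved, rather than merely establishing equality for a single fixed controller.
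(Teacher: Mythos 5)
Your proof is correct. The paper offers no standalone proof of Corollary \ref{cor:Vsym}: it asserts that the corollary is ``easy to derive'' from definition \eqref{eq:QR} and Lemma \ref{lem:Rn}, whose own proof is the same change-of-variables device you use. Your argument is the same in spirit---a symmetry of the reachable sets of Definition \ref{def:Rn} induced by jointly remapping controllers and disturbances---but you execute the reflection case explicitly: the involution $\tilde u_n(y_{0:n}) = -u_n(-y_{0:n})$, the inductive trajectory identity $y_n = -x_n$ under $w_n \mapsto -w_n$, and the bijection argument that carries the equality of $Q^{1:\infty}$ through the outer minimization in \eqref{eq:problem}. This is more than a restatement of the paper's reasoning: as you observe, Lemma \ref{lem:Rn} is stated only for $\alpha > 0$ (for negative $\alpha$ its disturbance bound $\eta/\alpha$ would be negative as written), so the corollary does not literally follow from the lemma; your reflection argument supplies exactly the missing $\alpha = -1$ case, with $\eta$ unchanged. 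Your construction also makes transparent that no symmetry assumption on $\mathcal{P}_0$ is needed, since negating both state and input leaves the parameters $(a,b)$ fixed---a point the paper's terse citation leaves implicit.
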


We then have that the following decomposition theorem holds:
\begin{thm}\label{thm:minmax}
For the cost-to-go function $V$ as defined in optimization problem \eqref{eq:problem}, it holds that
\begin{align*}
&V(x_0,\mathcal{P}_0)\\
=&\min\limits_{u_0} \left[Q^{1:1}(x_0,\mathcal{P}_0, u_0) \vee \right. \\ & \left. \max\limits_{x_1 \in \mathcal{R}^1_{\eta}(x_0,\mathcal{P}_0,u_0)} V\left(x_1,\mathcal{P}_0\cap \mathcal{S}(x_{0:1},u_0)\right)\right]
\end{align*}
\end{thm}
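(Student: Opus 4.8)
The plan is to prove the claimed identity as a dynamic-programming (Bellman) decomposition, establishing the two inequalities $V(x_0,\mathcal{P}_0) \le \min_{u_0}[\cdots]$ and $V(x_0,\mathcal{P}_0)\ge \min_{u_0}[\cdots]$ separately. The whole argument rests on the elementary splitting $\|x_{1:\infty}\|_\infty = |x_1|\vee\|x_{2:\infty}\|_\infty$ together with two facts that tie the first step to the quantities on the right-hand side. First, by Definition \ref{def:Rn} the set of first states $x_1$ an adversary can produce is exactly $\mathcal{R}^1_{\eta}(x_0,\mathcal{P}_0,u_0)$, so $\max_{(a,b),w_0}|x_1| = Q^{1:1}(x_0,\mathcal{P}_0,u_0)$. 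Second, for a fixed realized $x_1$, the set of parameters $(a,b)\in\mathcal{P}_0$ that could have produced it with an admissible $w_0$ (i.e.\ $|x_1-ax_0-bu_0|\le\eta$) is precisely $\mathcal{P}_0\cap\mathcal{S}(x_{0:1},u_0)$. This second observation is the crux, and it is what forces the updated uncertainty set to appear in the continuation value.

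For the inequality $V\le\min_{u_0}[\cdots]$ I would fix $u_0$ and build a causal policy attaining the bracketed value up to $\varepsilon$. Concretely, for each reachable $x_1$ I pick a continuation policy that is $\varepsilon$-optimal for the subproblem $V(x_1,\mathcal{P}_0\cap\mathcal{S}(x_{0:1},u_0))$ and splice it after $u_0$; this is admissible because $u_n$ may depend on $x_{0:n}$ and in particular on the observed $x_1$. For any adversary $(a,b)\in\mathcal{P}_0$, $w_{0:\infty}$, the realized $x_1$ lies in $\mathcal{R}^1_{\eta}$ and, crucially, $(a,b)$ lies in the consistent set $\mathcal{P}_0\cap\mathcal{S}(x_{0:1},u_0)$; hence $|x_1|\le Q^{1:1}(x_0,\mathcal{P}_0,u_0)$, while $\|x_{2:\infty}\|_\infty\le V(x_1,\mathcal{P}_0\cap\mathcal{S}(x_{0:1},u_0))+\varepsilon$ since that same adversary is feasible for the spliced continuation. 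Taking the supremum over adversaries, then letting $\varepsilon\downarrow 0$ and minimizing over $u_0$, yields the bound.

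For the reverse inequality I would fix an arbitrary causal policy and show its worst-case cost $Q^{1:\infty}$ dominates the bracket evaluated at its own time-$0$ action $u_0$; minimizing over policies then gives $V\ge\min_{u_0}[\cdots]$, since the bracket depends on the policy only through $u_0$. The term $Q^{1:\infty}\ge Q^{1:1}$ is immediate. The delicate term is $Q^{1:\infty}\ge\max_{x_1}V(x_1,\mathcal{P}_0\cap\mathcal{S}(x_{0:1},u_0))$, and this is where I expect the main obstacle: in the original problem the adversary commits to a single $(a,b)$ for all time, whereas the continuation value ranges over every parameter in the consistent set. The resolution is exactly the second observation above: for a fixed reachable $x_1$, every $(a,b)\in\mathcal{P}_0\cap\mathcal{S}(x_{0:1},u_0)$ can be matched with an admissible $w_0$ reproducing that same $x_1$, so the continuation worst case $\sup_{(a,b)\in\mathcal{P}_0\cap\mathcal{S},\,w_{1:\infty}}\|x_{2:\infty}\|_\infty$ is realizable inside the single-commitment adversary of the full problem, and it is bounded below by $V(x_1,\mathcal{P}_0\cap\mathcal{S}(x_{0:1},u_0))$ because $V$ is a minimum over continuation policies, so the fixed policy can only do worse. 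I would finally note that the argument goes through in the extended reals, covering $V=\infty$, and that compactness of $\mathcal{P}_0$ (hence of $\mathcal{R}^1_{\eta}$) justifies writing $\max$ rather than $\sup$ over $x_1$.
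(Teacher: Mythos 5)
Your proposal is correct and follows essentially the same route as the paper's proof: both rest on the splitting $\|x_{1:\infty}\|_\infty = |x_1|\vee\|x_{2:\infty}\|_\infty$, the fact that the continuation policy observes $x_1$, and the observation that the parameters consistent with a realized $x_1$ are exactly $\mathcal{P}_0\cap\mathcal{S}(x_{0:1},u_0)$. The only difference is one of rigor, not of approach: the paper asserts the resulting min/max exchange as a chain of equalities with a one-line justification, whereas you establish it as two separate inequalities via $\varepsilon$-optimal policy splicing and the matching of each consistent $(a,b)$ with an admissible $w_0$ reproducing the realized $x_1$.
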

\begin{proof}
First notice that we can write
\begin{align*}
 &V\left(x_0,\mathcal{P}_0\right)\\
=&\min\limits_{u_{0:\infty}} Q^{1:\infty}(x_0,\mathcal{P}_0,u_{0:\infty})\\
=&\min_{u_{0:\infty}}\max\limits_{\left[\begin{smallmatrix} a\\b\end{smallmatrix}\right]\in \mathcal{P}_0}  \max\limits_{\left\|w_{0:\infty}\right\|_{\infty}\leq \eta}  \left\| x_{1:\infty} \right\|_{\infty} \text{ s.t. dynamics \eqref{eq:dyn}} \\
=& \min_{u_0}\left[\max_{x_1 \in \mathcal{R}^1_{\eta}(x_0,u_0,\mathcal{P}_0)} \right. \min_{u_{1:\infty}}  \\
& \left.
\max\limits_{\left[\begin{smallmatrix} a\\b\end{smallmatrix}\right]\in \mathcal{P}_0\cap \mathcal{S}(x_{0:1},u_0)}\max\limits_{\left\|w_{1:\infty}\right\|_{\infty}\leq \eta} |x_1| \vee \left\| x_{2:\infty} \right\|_{\infty}\right]\\
\notag&\text{ s.t. dynamics \eqref{eq:dyn}},
\end{align*}
where the last equality follows from the fact that the state $x_1$ is known to the sequence of control actions $u_{1:\infty}$, and that $\|x_{1:\infty}\|_\infty = |x_1|\vee \|x_{2:\infty}\|_\infty$.  This last line can further be rewritten as
\begin{align*}
&V\left(x_0,\mathcal{P}_0\right)\\
=& \min_{u_0}\left[\left(\max_{x_1 \in \mathcal{R}^1_{\eta}(x_0,u_0,\mathcal{P}_0)}|x_1|\right) \vee  \left(\max_{x_1 \in \mathcal{R}^1_{\eta}(x_0,u_0,\mathcal{P}_0)}\right.\right. \\
& \left.\left. \dots\min_{u_{1:\infty}}\max\limits_{\left[\begin{smallmatrix} a\\b\end{smallmatrix}\right]\in \mathcal{P}_0\cap \mathcal{S}(x_{0:1},u_0)}\max\limits_{\left\|w_{1:\infty}\right\|_{\infty}\leq \eta} \|x_{2:\infty\|_\infty} \right)\right] \\ & \text{s.t. dynamics \eqref{eq:dyn}} \\
& = \min_{u_0} \left[Q^{1:1}(x_0,\mathcal{P}_0,u_0)\vee\right. \\ & \left.\max_{x_1 \in \mathcal{R}^1_{\eta}(x_0,u_0,\mathcal{P}_0)} V(x_1, \mathcal{P}_0\cap \mathcal{S}(x_{0:1},u_0),u_{1:\infty})\right]
\end{align*}
where the first equality follows from the fact that $\max_x f(x) \vee g(x) = (\max_x f(x))\vee(\max_x g(x))$, and the second from the definition of the cost-to-go function $V$, as defined in \eqref{eq:problem}, and from the identity $\max\limits_{x_1 \in \mathcal{R}^1_{\eta}(x_0,u_0,\mathcal{P}_0)} |x_1| = Q^{1:1}(x_0,\mathcal{P}_0, u_0(x_0))$.
\end{proof}

Theorem \ref{thm:minmax} sheds light on the structure of the optimal control policy. Specifically, let $\mathcal{P}_i :=\mathcal{P}_0 \cap \mathcal{S}(x_{0:i},u_{0:i-1})$; then the optimal control action at time $i$ is given by\footnote{This follows from Theorem \ref{thm:minmax} by a simple induction argument which is omitted in the interest of space.}
\begin{align}\label{eq:optctrl}
&u_i(x_{0:i})
= \mathrm{argmin}_{u} \max\limits_{x_{i+1} \in \mathcal{R}^1_{\eta}(x_i,\mathcal{P}_i,u)} |x_{i+1}| \vee \\ & \indent V\left(x_{i+1},\mathcal{P}_i\cap \mathcal{S}(x_{i:i+1},u)\right).
\end{align}

In particular, we see that the control action $u_i$ is a function of both the state history $x_{0:i}$ and the updated uncertainty set $\mathcal{P}_i$, and naturally results in a trade-off between exploration and exploitation. 
If the first term dominates the cost function, this indicates that the controller is in an exploitation mode, using its gathered information on the uncertainty set to minimize state deviation.  In contrast, if the second term dominates, this can be interpreted as an exploration action aimed at reducing the effects of parametric uncertainty on future state deviations.

\subsection{Weakly and Strongly Adaptive Robust State-Feedback Controller}
Unfortunately, Eq. \eqref{eq:optctrl} does not provide a practical means of computing an optimal controller. Nevertheless, we can approximate \eqref{eq:optctrl} by using $V_{RSF}$ as an upper bound for the cost-to-go function in \eqref{eq:optctrl}. In particular, we will define the \textit{weakly adaptive robust state-feedback} (WRSF) controller as the solution to the optimization problems
\begin{align}\label{eq:WRSFctrl}
&u^{WRSF}_n(x_n,\mathcal{P}_n)\\
\notag= &\text{argmin}_{u} \max\limits_{x_{n+1} \in \mathcal{R}^{1}_{\eta}(x_n,\mathcal{P}_n,u)}|x_{n+1}| \vee V_{RSF}\left(x_{n+1},\mathcal{P}_n\right)
\end{align}
and the \textit{strongly adaptive robust state-feedback} (SRSF) controller as the solution to the optimization problems
\begin{align}\label{eq:SRSFctrl}
&u^{SRSF}_n(x_n,\mathcal{P}_n)\\
\notag= &\text{argmin}_{u} \max\limits_{x_{n+1} \in \mathcal{R}^{1}_{\eta}(x_n,\mathcal{P}_n,u)}\\
&\dots|x_{n+1}| \vee V_{RSF}\left(x_{n+1},\mathcal{P}_n\cap \mathcal{S}(x_{n:n+1},u)\right)
\end{align}
Accordingly we will define $V_{WRSF}(x_n,\mathcal{P}_n)$ and $V_{SRSF}(x_n,\mathcal{P}_n)$ as 
\begin{align}\label{eq:VARSF}
&V_{WRSF}(x_n,\mathcal{P}_n)\\
\notag= &\min\limits_{u} \max\limits_{x_{n+1} \in \mathcal{R}^{1}_{\eta}(x_n,\mathcal{P}_n,u)}|x_{n+1}| \vee V_{RSF}\left(x_{n+1},\mathcal{P}_n\right)\\
&V_{SRSF}(x_n,\mathcal{P}_n)\\
\notag= &\min_{u} \max\limits_{x_{n+1} \in \mathcal{R}^{1}_{\eta}(x_n,\mathcal{P}_n,u)}\\
&\dots|x_{n+1}| \vee V_{RSF}\left(x_{n+1},\mathcal{P}_n\cap \mathcal{S}(x_{n:n+1},u)\right)
\end{align}
After some standard manipulation, we can formulate the resulting controllers as
\begin{align}\label{eq:arsfctrls}
u^{WRSF}_n &= K(\mathcal{P}_n) x_n\\
\notag u^{SRSF}_n
 &=\text{argmin}_{u} \max\limits_{x_{n+1} \in \mathcal{R}^{1}_{\eta}(x_n,\mathcal{P}_n,u)}\\
\label{eq:eqSRSF}&\dots|x_{n+1}| \vee \frac{\eta}{1- \lambda\left(\mathcal{P}_n\cap \mathcal{S}(x_{n:n+1},u)\right)}
\end{align}
$u^{WRSF}_n$ turns out to present a simple yet significantly better strategy than the RSF controller, as it is basically using the information of recent observations to recompute an RSF controller. Therefore it is not surprising that we have 
$V_{WRSF}(x_n,\mathcal{P}_n) = V_{RSF}(x_n,\mathcal{P}_n)$

To apply $u^{SRSF}$ on the other hand, requires the solution of a scalar min-max problem at every time-step.  Notice from \eqref{eq:eqSRSF}, how the objective of  SRSF policy can be seen as a exploration vs. exploitation trade-off, as it incorporates the reduced uncertainty of the next time step. As we will show in the following theorem, if the initial uncertainty set is strongly stabilizable, then this SRSF policy is stabilizing and performs at least as well as the WRSF and RSF policy.
\begin{thm}[SRSF vs. WRSF vs. RSF]\label{thm:SRSFRSF}
Let $x^{RSF}_n$, $x^{WRSF}_n$, $x^{SRSF}_n$ be sequences generated from running $u^{RSF}$, $u^{WRSF}$ and $u^{SRSF}$ in closed loop with the same initial condition $x_0$ and assume a strongly stabilizable $\mathcal{P}_0$ with the usual dynamics \eqref{eq:dyn}. Furthermore, assume that all sequences obtain the same uncertainty sets $\mathcal{P}_{n}$, then
$x^{RSF}_n$, $x^{SRSF}_n$ and $x^{WRSF}_n$ are upper bounded by
\begin{align}
\label{eq:BSRSF}|x^{SRSF}_n| &\leq \gamma^{SRSF}_n \\
\label{eq:BWRSF}|x^{WRSF}_n| &\leq \gamma^{WRSF}_n\\
\label{eq:RSF}|x^{RSF}_n| &\leq \gamma^{RSF}_n 
\end{align}
where 
\begin{align}
\gamma^{SRSF}_{n+1} &= V_{SRSF}(\gamma^{SRSF}_n,\mathcal{P}_n) \\
\gamma^{WRSF}_{n+1} &= V_{RSF}(\gamma^{WRSF}_n,\mathcal{P}_n)\\
\gamma^{RSF}_{n+1} &= V_{RSF}(\gamma^{RSF}_n,\mathcal{P}_0)
\end{align}
with $\gamma^{SRSF}_0 = \gamma^{WRSF}_0 = \gamma^{RSF}_0 = |x_0|$.
Furthermore, we have $\gamma^{SRSF}_n \leq \gamma^{WRSF}_n \leq \gamma^{RSF}_n$ 
\end{thm}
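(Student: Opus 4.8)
The plan is to prove the statement in two stages: first the three pointwise bounds \eqref{eq:BSRSF}--\eqref{eq:RSF}, each by induction via the comparison lemma (Lemma \ref{lem:comp}), and then the ordering $\gamma^{SRSF}_n \le \gamma^{WRSF}_n \le \gamma^{RSF}_n$ by a separate induction resting on monotonicity of $V_{RSF}$. For each bound I would reduce to a one-step inequality $|x^{\bullet}_{n+1}| \le f^{\bullet}_n(|x^{\bullet}_n|)$ with $f^{\bullet}_n$ non-decreasing and $\gamma^{\bullet}_{n+1} = f^{\bullet}_n(\gamma^{\bullet}_n)$; since all sequences start at $\gamma^{\bullet}_0 = |x_0|$, Lemma \ref{lem:comp} then closes the induction immediately.

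For RSF and WRSF the one-step inequality is elementary. The true $(a,b)$ lies in $\mathcal{P}_0$ and in every $\mathcal{P}_n$, so by Definition \ref{def:lamb} we have $|a+bK(\mathcal{P}_0)| \le \lambda(\mathcal{P}_0)$ and $|a+bK(\mathcal{P}_n)| \le \lambda(\mathcal{P}_n)$, whence $|x^{RSF}_{n+1}| \le \lambda(\mathcal{P}_0)|x^{RSF}_n| + \eta \le V_{RSF}(|x^{RSF}_n|,\mathcal{P}_0)$ and likewise $|x^{WRSF}_{n+1}| \le V_{RSF}(|x^{WRSF}_n|,\mathcal{P}_n)$. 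Taking $f^{RSF} = V_{RSF}(\cdot,\mathcal{P}_0)$ and $f^{WRSF}_n = V_{RSF}(\cdot,\mathcal{P}_n)$, both non-decreasing from the closed form \eqref{eq:sf_cost}, finishes these two cases. For SRSF the one-step inequality comes straight from the defining min--max \eqref{eq:VARSF}: since $u^{SRSF}_n$ attains the minimum and the realized $x^{SRSF}_{n+1}$ lies in the reachable set, the SRSF integrand evaluated at $x^{SRSF}_{n+1}$ is at most the min--max value, and as that integrand dominates $|x^{SRSF}_{n+1}|$ we get $|x^{SRSF}_{n+1}| \le V_{SRSF}(x^{SRSF}_n,\mathcal{P}_n)$; using the symmetry $V_{SRSF}(x,\mathcal{P}) = V_{SRSF}(|x|,\mathcal{P})$ (the argument of Corollary \ref{cor:Vsym} carries over to $V_{SRSF}$), I set $f^{SRSF}_n = V_{SRSF}(\cdot,\mathcal{P}_n)$.

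For the ordering I would induct on $n$ using two facts. First, $V_{RSF}(x,\mathcal{P})$ is non-decreasing in both $|x|$ and $\lambda(\mathcal{P})$, read off \eqref{eq:sf_cost}; combined with $\lambda(\mathcal{P}_n) \le \lambda(\mathcal{P}_0)$ (monotonicity of $\lambda$ under $\mathcal{P}_n \subseteq \mathcal{P}_0$, immediate from Definition \ref{def:lamb}), the chain $\gamma^{WRSF}_{n+1} = V_{RSF}(\gamma^{WRSF}_n,\mathcal{P}_n) \le V_{RSF}(\gamma^{RSF}_n,\mathcal{P}_n) \le V_{RSF}(\gamma^{RSF}_n,\mathcal{P}_0) = \gamma^{RSF}_{n+1}$ yields the right inequality. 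Second, $V_{SRSF}(x,\mathcal{P}) \le V_{RSF}(x,\mathcal{P})$: since $\mathcal{P}\cap\mathcal{S}(x{:}x',u) \subseteq \mathcal{P}$ forces $\lambda(\mathcal{P}\cap\mathcal{S}) \le \lambda(\mathcal{P})$, the SRSF integrand in \eqref{eq:VARSF} sits pointwise below the WRSF integrand, so the min--max is smaller and $V_{SRSF} \le V_{WRSF} = V_{RSF}$; this gives $\gamma^{SRSF}_{n+1} = V_{SRSF}(\gamma^{SRSF}_n,\mathcal{P}_n) \le V_{RSF}(\gamma^{SRSF}_n,\mathcal{P}_n) \le V_{RSF}(\gamma^{WRSF}_n,\mathcal{P}_n) = \gamma^{WRSF}_{n+1}$, the left inequality. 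Note that the ordering needs only monotonicity of $V_{RSF}$, not of $V_{SRSF}$.

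The main obstacle is the single ingredient the SRSF bound of stage one genuinely requires: that $f^{SRSF}_n = V_{SRSF}(\cdot,\mathcal{P}_n)$ be non-decreasing in $|x|$. This is delicate because increasing $|x|$ has competing effects on the SRSF objective, enlarging the exploitation term $|x'|$ while simultaneously thinning the slab $\mathcal{S}(x{:}x',u)$, which can shrink $\lambda(\mathcal{P}\cap\mathcal{S})$ and thus lower the exploration term $\eta/(1-\lambda(\mathcal{P}\cap\mathcal{S}))$. I would attack this from the reformulation \eqref{eq:eqSRSF} by passing to the gain variable $k = u/x$, so that $x' = (a+bk)x + w$ and the slab reads $\{|a+bk - x'/x| \le \eta/|x|\}$, and then argue via a coupling of the $|x_1|$- and $|x_2|$-problems ($|x_1|\le|x_2|$) that the adversary's maximizing value is governed by a non-decreasing function of $|x|$. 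Should a clean proof prove elusive, the fallback is to settle for the weaker $|x^{SRSF}_n| \le \gamma^{WRSF}_n$, which follows with no monotonicity of $V_{SRSF}$ from $V_{SRSF} \le V_{RSF}$ and Lemma \ref{lem:comp}.
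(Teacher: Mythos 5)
Your proposal takes essentially the same route as the paper's proof: derive the one-step inequalities $|x^{WRSF}_{n+1}| \leq V_{WRSF}(x^{WRSF}_n,\mathcal{P}_n)$ and $|x^{SRSF}_{n+1}| \leq V_{SRSF}(x^{SRSF}_n,\mathcal{P}_n)$ directly from the defining min--max problems, close the three bounds with the comparison lemma (Lemma~\ref{lem:comp}), and obtain the ordering from $V_{SRSF}(x,\mathcal{P}_n) \leq V_{RSF}(x,\mathcal{P}_n) \leq V_{RSF}(x,\mathcal{P}_0)$ together with monotonicity of $V_{RSF}$ in $|x|$.

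The one place you go beyond the paper is instructive: the ``main obstacle'' you flag --- that applying Lemma~\ref{lem:comp} to the SRSF sequence requires $V_{SRSF}(\cdot,\mathcal{P}_n)$ to be non-decreasing --- is passed over silently in the paper, which invokes the comparison lemma for SRSF while only asserting monotonicity of $V_{WRSF}$ and $V_{RSF}$. So this is a gap in the paper's own argument rather than a defect of your approach, and you are right that it is not obvious: increasing $|x_n|$ enlarges the exploitation term $|x_{n+1}|$ but thins the slab $\mathcal{S}(x_{n:n+1},u)$, which can shrink $\lambda\left(\mathcal{P}_n \cap \mathcal{S}(x_{n:n+1},u)\right)$ and hence the exploration term. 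Your fallback --- settling for $|x^{SRSF}_n| \leq \gamma^{WRSF}_n$, which needs only $V_{SRSF} \leq V_{RSF}$ and monotonicity of $V_{RSF}$ --- is sound and still yields stability of the SRSF closed loop, though it gives up the sharper recursion $\gamma^{SRSF}_{n+1} = V_{SRSF}(\gamma^{SRSF}_n,\mathcal{P}_n)$ claimed in the theorem statement.
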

\begin{proof}
While we have proven the first part already for the RSF controller, we can obtain the performance bounds for WRSF and SRSF, by observing from their definition that they are upperbounds on the worst-case adversarial dynamics. Therefore, at every-time step $n$ holds
\begin{align}
|x^{SRSF}_{n+1}| &\leq V_{SRSF}(x^{SRSF}_{n},\mathcal{P}_n) \\
|x^{WRSF}_{n+1}| & \leq V_{WRSF}(x^{WRSF}_{n},\mathcal{P}_n) 
\end{align}
Then using the comparison lemma \eqref{lem:comp}, we can establish
$|x^{SRSF}_n| \leq \gamma^{SRSF}_n$ and $|x^{WRSF}_n| \leq \gamma^{WRSF}_n$. The inequality $\gamma^{SRSF}_n \leq \gamma^{WRSF}_n \leq \gamma^{RSF}_n$ follows, by observing that 
\begin{align*}
V_{SRSF}(x,\mathcal{P}_n) \leq V_{RSF}(x,\mathcal{P}_n) \leq V_{RSF}(x,\mathcal{P}_0)
\end{align*} 
and noticing that $V_{WRSF}$ and $V_{RSF}$ are monotonic increasing in $|x|$ and by using comparison Lemma \eqref{lem:comp}.
\end{proof}

\section{Passive Aggressive Feedback Controller}

In this section we introduce a control policy that is applicable to initial uncertainty sets that are not strongly stabilizable.  The controller evolves according to two stages: at first, a ``passive-aggressive'' feedback controller is deployed that is used as long as the consistent set $\mathcal{P}_0 \cap \mathcal{S}(x_{0:N},u_{0:N-1})$ is not strongly stabilizable.  We show that this control policy is guaranteed to shrink the initial uncertainty set to one that is strongly stabilizable once the state becomes sufficiently large. Once the uncertainty set has been reduced to a strongly stabilizable one, the controller switches to the ARSF strategy described in the previous section and drives the state to the origin.

\begin{defn}
Let $\mathcal{P}_n$ be the remaining uncertainty in the system parameters after observing $x_{0:n}$ and $u_{0:n-1}$. Specifically
\begin{align}
\mathcal{P}_{n+1} &= \mathcal{P}_{n}\cap \mathcal{S}\left(x_{n:n+1},u_n \right)
\end{align}
where we set $\mathcal{P}_{0}$ to be the initial  uncertainty set.
\end{defn}
\begin{defn}
Define $k_{max}(\mathcal{P}_n)$ as the maximum deadbeat controller gain among the parameters in $\mathcal{P}_n$:
\begin{align}
k_{max}(\mathcal{P}_n) := \max\limits_{\left[\begin{smallmatrix} a\\b\end{smallmatrix}\right]\in \mathcal{P}_n} \left| -\frac{a}{b}\right| 
\end{align}
\end{defn}

We begin with an intermediate result that shows if the system state is sufficiently large, then the stability margin of the uncertainty set can be reduced by an amount governed by the noise bound $\eta$ and the size of the state itself.  In this way, there is a notion of signal-to-noise that comes into play in the ability to learn an uncertainty set.

\begin{thm}[Passive-Aggressive Learning]\label{thm:PALearn}
Let $\mathcal{P}_0$ be the initial (not necessarily strongly stabilizable) uncertainty set and fix positive constants $\lambda^*,p>0$ satisfying $\lambda^*>\frac{1}{p}$. Consider the system \eqref{eq:dyn} with the following time-varying state-feedback controller:
\begin{align}\label{eq:learnu}
u^{PAL}_n (x_n)&=k_n x_n 
\end{align}
where
\begin{align*}
k_n = (-1)^n\frac{k_{max}(\mathcal{P}_n)}{\lambda^*p-1}
\end{align*}
Then $\forall n$ that satisfy $\min\{|x_{n-1}|,|x_{n-2}|\} \geq p\eta$ holds $\lambda(\mathcal{P}_{n}) \leq \lambda^*$.
\end{thm}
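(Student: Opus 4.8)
The plan is to upper bound $\lambda(\mathcal{P}_n)$ by exhibiting a \emph{single} feedback gain $\bar k$ and showing that $\max_{[a,b]^T\in\mathcal{P}_n}|a+b\bar k|\le\lambda^*$; since $\lambda(\mathcal{P}_n)$ is the minimum over all gains, any such $\bar k$ furnishes the claimed bound. The first step is a reduction. From the recursion $\mathcal{P}_n=\mathcal{P}_{n-1}\cap\mathcal{S}(x_{n-1:n},u_{n-1})$ together with $\mathcal{P}_{n-1}\subseteq\mathcal{S}(x_{n-2:n-1},u_{n-2})$, the set $\mathcal{P}_n$ is contained in the intersection of the two most recent slices. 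Written out under the feedback $u_i=k_ix_i$, each slice is the strip $|(a+bk_i)-x_{i+1}/x_i|\le\eta/|x_i|$ for $i\in\{n-2,n-1\}$, and the hypothesis $\min\{|x_{n-1}|,|x_{n-2}|\}\ge p\eta$ makes both half-widths at most $1/p$. Because $\lambda$ is monotone under set inclusion, it suffices to bound $\lambda$ of this two-slice parallelogram.

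Next I would exploit the alternating sign of the controller. Since $k_{n-1}$ and $k_{n-2}$ carry opposite signs, $|k_{n-1}-k_{n-2}|=|k_{n-1}|+|k_{n-2}|$ is as large as possible, so the two strips are maximally transversal and their intersection is thin in the $b$-direction. Parametrizing a point of the parallelogram by its slice values $s=(a+bk_{n-2})-c_{n-2}$ and $t=(a+bk_{n-1})-c_{n-1}$, with $c_i=x_{i+1}/x_i$ and $|s|,|t|\le 1/p$, any gain $\bar k$ expresses $a+b\bar k$ as an affine function of $(s,t)$ plus the value at the parallelogram centre $(a_c,b_c)$, giving
\[
\max_{\mathcal{P}_n}|a+b\bar k|\le|a_c+b_c\bar k|+\tfrac1p\,R(\bar k),\qquad R(\bar k):=\frac{|k_{n-1}-\bar k|+|\bar k-k_{n-2}|}{|k_{n-1}-k_{n-2}|}.
\]

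I would then pick $\bar k=-a_c/b_c$, the deadbeat gain of the centre, which annihilates the first term and leaves only $\tfrac1p R(\bar k)$. The key inequality is $R(\bar k)\le\lambda^*p$, valid whenever $|\bar k|\le(\lambda^*p-1)\min\{|k_{n-1}|,|k_{n-2}|\}$: by the monotonicity $k_{\max}(\mathcal{P}_{n-2})\ge k_{\max}(\mathcal{P}_{n-1})\ge k_{\max}(\mathcal{P}_n)$ one has $|k_{n-1}|,|k_{n-2}|\ge k_{\max}(\mathcal{P}_{n-1})/(\lambda^*p-1)$, while the deadbeat gain of any genuine point of $\mathcal{P}_n\subseteq\mathcal{P}_{n-1}$ has magnitude at most $k_{\max}(\mathcal{P}_{n-1})$; substituting this into the definition of $R$ yields $R\le(\lambda^*p-1)+1=\lambda^*p$, hence $\tfrac1p R\le\lambda^*$.

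The main obstacle is reconciling these last two steps. The clean ``one half-width'' residual requires anchoring at the centre $(a_c,b_c)$ of the two-slice region, whereas the bound $|\bar k|\le k_{\max}(\mathcal{P}_{n-1})$ is only immediate for genuine points of $\mathcal{P}_n$ (for instance the true parameter), which need not coincide with the centre. Anchoring at the true parameter is harmless for the gain bound but inflates the residual to two half-widths, giving only $\lambda(\mathcal{P}_n)\le 2\lambda^*$. The delicate part of the argument is thus to show that the centre's deadbeat gain $-a_c/b_c$ is itself controlled by $k_{\max}(\mathcal{P}_{n-1})$ --- which I expect to follow from the thinness of $\mathcal{P}_n$ forcing the centre to lie close to the admissible set --- so that $R(-a_c/b_c)\le\lambda^*p$ and the residual closes to exactly $\lambda^*$. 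The strict assumption $\lambda^*>1/p$, i.e.\ $\lambda^*p-1>0$, is what keeps the scaling factor $\lambda^*p-1$ positive throughout.
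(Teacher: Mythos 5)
Your proposal reconstructs, essentially step for step, the argument the paper itself gives: reduce $\mathcal{P}_n$ to the intersection of the two most recent slices, use the alternating sign of $k_{n-1},k_{n-2}$ (so that $|k_{n-1}-k_{n-2}|=|k_{n-1}|+|k_{n-2}|$) to make that intersection thin in $b$, bound the margin by a term of the form (center deadbeat gain)$\times$($b$-width) plus ($a$-width), and close using the monotonicity $k_{max}(\mathcal{P}_{n-2})\geq k_{max}(\mathcal{P}_{n-1})$ together with the designed magnitudes $|k_i|=k_{max}(\mathcal{P}_i)/(\lambda^*p-1)$. The only real difference is cosmetic: you work with the exact parallelogram via the $(s,t)$ parametrization, whereas the paper passes to the outer bounding box $\mathcal{B}$ and invokes Lemma \ref{lem:box}. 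The steps you do carry out are correct --- I verified both your affine-deviation bound $\max_{\mathcal{P}_n}|a+b\bar k|\leq |a_c+b_c\bar k|+\tfrac{1}{p}R(\bar k)$ and the implication $|\bar k|\leq(\lambda^*p-1)\min\{|k_{n-1}|,|k_{n-2}|\}\Rightarrow R(\bar k)\leq\lambda^*p$ (the latter from $2\min\{|k_{n-1}|,|k_{n-2}|\}\leq|k_{n-1}|+|k_{n-2}|$).

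The obstacle you flag is a genuine gap in your write-up: you need $|{-a_c/b_c}|\leq k_{max}(\mathcal{P}_{n-1})$, but the parallelogram center is the noise-free solution of the last two transition equations and need not lie in $\mathcal{P}_{n-1}$ (nor in $\mathcal{P}_n$), so this bound does not follow from the definition of $k_{max}$; anchoring at a genuine point of $\mathcal{P}_n$ instead doubles the residual and yields only $2\lambda^*$, exactly as you say. However, you should know that this is precisely the step the paper's own proof does not establish either: its inequality \eqref{eq:delbdela} bounds $\lambda(\mathcal{P}_{n_{-1}}\cap\mathcal{B})$ by $k_{max}(\mathcal{P}_{n_{-1}})\Delta_b\mathcal{B}+\Delta_a\mathcal{B}$ ``from the discussion on stability margins of boxed uncertainties,'' but the appendix formula is $\lambda(\mathcal{B})=|k_{av}|\Delta_b\mathcal{B}+\Delta_a\mathcal{B}$ with $k_{av}$ the deadbeat gain of the box center, and substituting $k_{max}(\mathcal{P}_{n_{-1}})$ for $|k_{av}|$ is valid only under the very claim you could not prove. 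So you have reproduced the paper's proof including its weakest link; completing either version would require an additional argument (e.g., showing the center's deadbeat gain is within $O(\Delta_a\mathcal{B},\Delta_b\mathcal{B})$ of some deadbeat gain realized in $\mathcal{P}_{n-1}$, at the price of a slightly perturbed constant in place of $\lambda^*$).
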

\begin{proof} Assume $n_0$ to be such that $\min\{|x_{n_0-1}|,|x_{n_0-2}|\} \geq p\eta$ and $n_{-1}$, $n_{-2}$ to be $n_0-1$ and $n_0-2$ respectively. Then,
\begin{align}
\notag\lambda(\mathcal{P}_{n_0}) &= \lambda\left(\mathcal{P}_{n_{-1}} \cap \mathcal{S}(x_{n_{-2}:n_0},u_{n_{-2}:n_{-1}}) \right)\\
\notag&\leq \lambda\left(\mathcal{P}_{n_{-1}} \cap \mathcal{B}(x_{n_{-2}:n_0},u_{n_{-2}:n_{-1}}) \right)\\
\notag&\leq k_{max}\left(\mathcal{P}_{n_{-1}}\right)\Delta_{b}\mathcal{B}(x_{n_{-2}:n_0},u_{n_{-2}:n_{-1}}) + \\
\label{eq:delbdela}&\dots \Delta_{a}\mathcal{B}(x_{n_{-2}:n_0},u_{n_{-2}:n_{-1}}) 
\end{align}
where $\mathcal{B}(\dots)$ represents the smallest outer-bounding box set of the $\mathcal{S}(x_{n_{-2}:n_0},u_{n_{-2}:n_{-1}})$. Furthermore, $\Delta_b \mathcal{B}$ and $\Delta_a \mathcal{B}$ are the maximum uncertainty of parameters $a$ and $b$ in the set $\mathcal{B}$ as discussed in the appendix \eqref{sec:box}. Finally we note that the last inequality follows from the discussion on stability margins of boxed uncertainties in \eqref{sec:box}. Now, as $k_{n_{-2}}$ and $k_{n_{-1}}$ have opposite sign by construction, we obtain from app.\eqref{sec:box}:
\begin{align}
\notag&\Delta_{b}\mathcal{B}(x_{n_{-2}:n_0},u_{n_{-2}:n_{-1}})\\
\label{eq:delb}=&\left(\frac{\eta}{|x_{n_{-2}}|} + \frac{\eta}{|x_{n_{-1}}|}\right)\frac{1}{|k_{n_{-1}}| + |k_{n_{-2}}|}\\
\notag&\Delta_{a}\mathcal{B}(x_{n_{-2}:n_0},u_{n_{-2}:n_{-1}})\\
\label{eq:dela}=&\left(\frac{|k_{n_{-1}}|\eta}{|x_{n_{-2}}|} + \frac{|k_{n_{-2}}|\eta}{|x_{n_{-1}}|}\right)\frac{1}{|k_{n_{-1}}| + |k_{n_{-2}}|}
\end{align}
Now, since by assumption we have that $\min\{|x_{n_{-1}}|,|x_{n_{-2}}|\} \geq p \eta$, we can upper bound equations \eqref{eq:delb}, \eqref{eq:dela} by
\begin{align}
\label{eq:delbapprox}\Delta_{b}\mathcal{B}(x_{n_{-2}:n_0},u_{n_{-2}:n_{-1}})&\leq \frac{1}{p}\frac{2}{|k_{n_{-1}}| + |k_{n_{-2}}|}\\
\label{eq:delaapprox} \Delta_{a}\mathcal{B}(x_{n_{-2}:n_0},u_{n_{-2}:n_{-1}})&\leq \frac{1}{p}
\end{align}
Furthermore, notice that $k_{max}(\mathcal{P}_{n_{-2}}) \geq k_{max}(\mathcal{P}_{n_{-1}})$ so we have
\begin{align}
&&|k_{n_{-1}}| + |k_{n_{-2}}| &\geq \frac{2 k_{max}(\mathcal{P}_{n_{-1}})}{\lambda^*p-1}\\
&\Leftrightarrow & \frac{2}{|k_{n_{-1}}| + |k_{n_{-2}}|} &\leq \frac{\lambda^*p-1}{ k_{max}(\mathcal{P}_{n_{-1}})},
\end{align}
which lets us further upper-bound \eqref{eq:delbapprox} by
\begin{align}
\label{eq:delbapprox2}\Delta_{b}\mathcal{B}(x_{n_{-2}:n_0},u_{n_{-2}:n_{-1}})&\leq \frac{1}{k_{max}(\mathcal{P}_{n_{-1}})}\left(\lambda^*-\frac{1}{p}\right)
\end{align}
Finally, plugging the bounds \eqref{eq:delbapprox2} and \eqref{eq:delaapprox} into equation \eqref{eq:delbdela} gives us the desired result:
\begin{align*}
\lambda(\mathcal{P}_{n_0}) &\leq \frac{k_{max}(\mathcal{P}_{n_{-1}})}{k_{max}(\mathcal{P}_{n_{-1}})}\left(\lambda^*-\frac{1}{p}\right) + \frac{1}{p}\\
\leq \lambda^*
\end{align*}
\end{proof}

With this ability to learn the uncertainty set, we now show how a two-stage controller can lead to a stabilizing (in the BIBO sense) adaptive controller.

\begin{thm}[Passive-Aggressive Learning and Control]\label{thm:PALC}
Let $\mathcal{P}_0$ be an initial (not necessarily strongly stabilizable) uncertainty set, and fix positive constants $\lambda^*,p>0$, s.t. $1-\frac{1}{p}>\lambda^*>\frac{1}{p}$. Consider the system \eqref{eq:dyn} with the following switched control strategy:
\begin{align*}
u_n(x_n)=& \left\{ \begin{array}{ll} u^{PAL}_n(x_n) & \text{If $\lambda(\mathcal{P}_n) > \lambda^*$} \\
u^{ARSF}_n(x_n,\mathcal{P}_n) & \text{If $\lambda(\mathcal{P}_n) \leq \lambda^*$}
\end{array} \right.
\end{align*}
where $u^{PAL}_n$ represents the Passive-Aggressive Learner from \eqref{thm:PALearn} and $u^{ARSF}_n$ is one of the Adaptive Robust State-Feedback controller from section \eqref{sec:ARSF}. Then, it holds:
\begin{enumerate}[(i)]
\item \label{it:bound1} The closed loop system response $\left\| x\right\|_{\infty}$ is bounded as:
\begin{align}
\left\| x\right\|_{\infty} \leq \max_{v \in \left\{ x_0, p\eta  \right\}} |v|\vee Q^{1:2}\left(v,\mathcal{P}_0,u^{PAL}_{0:1}\right)
\end{align}
\item \label{it:bound2} If $\lambda(\mathcal{P}_{n'}) \leq \lambda^*$, then $\forall n \geq n'$, $x_{n}$ satisfies the convergence bounds associated with $u_{ARSF}$ from section \eqref{sec:ARSF}. In particular, it always satisfies the bound
\begin{align*}
|x_n|& \leq \left(\lambda^{*}\right)^{n-n'}|x_{n'}| + \frac{\eta}{1-\lambda^*} 
\end{align*}
\end{enumerate}
\end{thm}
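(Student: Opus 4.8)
The plan is to exploit the two-phase nature of the controller: a ``passive-aggressive'' learning phase while $\lambda(\mathcal{P}_n) > \lambda^*$, followed by an adaptive robust phase once $\lambda(\mathcal{P}_n) \le \lambda^*$. The first thing I would establish is that this switch is permanent. Since $\mathcal{P}_{n+1} = \mathcal{P}_n \cap \mathcal{S}(x_{n:n+1}, u_n) \subseteq \mathcal{P}_n$, and since $\lambda(\cdot)$ from Definition~\ref{def:lamb} is monotone under set inclusion (for a fixed gain $k$ the inner maximum can only shrink on a smaller set, hence so does the outer minimum), the sequence $\lambda(\mathcal{P}_n)$ is non-increasing. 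Consequently there is a well-defined first time $N$ (possibly $N=\infty$) with $\lambda(\mathcal{P}_N)\le\lambda^*$; for $n<N$ the controller applies $u^{PAL}$, and for $n\ge N$ it applies $u^{ARSF}$ with every subsequent set still satisfying $\lambda(\mathcal{P}_n)\le\lambda^*$.

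Part (ii) is the easier half and I would dispatch it first. For $n\ge n'$ the monotonicity above gives $\lambda(\mathcal{P}_n)\le\lambda(\mathcal{P}_{n'})\le\lambda^*$, so the adaptive robust controller is active throughout. Since the true parameters always lie in $\mathcal{P}_n$, the realized gain satisfies $|a+bK(\mathcal{P}_n)|\le\lambda(\mathcal{P}_n)\le\lambda^*$, giving the one-step recursion $|x_{n+1}|\le\lambda^*|x_n|+\eta$. Applying the Comparison Lemma~\ref{lem:comp} with $f(v)=\lambda^* v+\eta$ anchored at $n'$ yields $|x_n|\le(\lambda^*)^{n-n'}|x_{n'}|+\tfrac{1-(\lambda^*)^{n-n'}}{1-\lambda^*}\eta\le(\lambda^*)^{n-n'}|x_{n'}|+\tfrac{\eta}{1-\lambda^*}$, which is exactly the claimed bound (and, since SRSF dominates WRSF by Theorem~\ref{thm:SRSFRSF}, it holds for either ARSF choice).

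For Part (i) the governing observation is the contrapositive of Theorem~\ref{thm:PALearn}: whenever the controller is still learning at time $n$, i.e. $\lambda(\mathcal{P}_n)>\lambda^*$, we cannot have $\min\{|x_{n-1}|,|x_{n-2}|\}\ge p\eta$, so at least one of the two preceding states has magnitude below $p\eta$. I would use this to show that every PAL-phase state lies within at most two $u^{PAL}$-steps of either $x_0$ or of a recent state of magnitude $\le p\eta$: for $2\le n<N$ I split on which of $x_{n-1},x_{n-2}$ is small and bound $|x_n|$ by the two-step worst case $Q^{1:2}$ started from that small state, while $n\in\{0,1\}$ are covered by the excursion started from $x_0$ itself. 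To collapse all of these onto the two terms $Q^{1:2}(x_0,\mathcal{P}_0,u^{PAL}_{0:1})$ and $Q^{1:2}(p\eta,\mathcal{P}_0,u^{PAL}_{0:1})$ in the claim, I would invoke three facts about $Q^{1:2}$: it is nondecreasing in the initial magnitude, it is nondecreasing under enlarging the uncertainty set (so $\mathcal{P}_n\subseteq\mathcal{P}_0$ lets me replace $\mathcal{P}_n$ by $\mathcal{P}_0$), and it is invariant under a global sign flip of the alternating $u^{PAL}$ gains (so the parity of $n$ is irrelevant), the latter following from the symmetry $V(x_0,\mathcal{P}_0)=V(-x_0,\mathcal{P}_0)$ of Corollary~\ref{cor:Vsym} and the scaling of Lemma~\ref{lem:Rn}. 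Finally, the ARSF tail is absorbed by noting that the standing assumption $\lambda^*<1-\tfrac1p$ gives $\tfrac{\eta}{1-\lambda^*}<p\eta$, so the asymptotic level and, via $\lambda^*M+\eta<M$, the whole Part~(ii) tail stay below $M:=\max_{v\in\{x_0,p\eta\}}\left(|v|\vee Q^{1:2}(v,\mathcal{P}_0,u^{PAL}_{0:1})\right)$.

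I expect the main obstacle to be controlling the state exactly at the switching instant $n=N$. Here the learning is completed only after the second consecutive large transition is observed, so the final, still-aggressive action $u^{PAL}_{N-1}$ has already been applied to a large $x_{N-1}$; bounding the resulting $x_N$ cleanly by the two-step quantity $Q^{1:2}(p\eta,\cdot)$, rather than a three-step excursion, is the delicate point. This is precisely where the sign-alternation of the $u^{PAL}$ gains together with the box-uncertainty estimates of the appendix must be used to argue that forcing two consecutive states to stay large constrains the adversary enough that the completing step cannot exceed the two-step worst case. Establishing the monotonicity-in-$\mathcal{P}$ property of $Q^{1:2}$ rigorously, given that the $u^{PAL}$ gains themselves depend on the evolving $\mathcal{P}_n$, is the other place where I would expect to spend the most effort.
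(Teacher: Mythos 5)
Your proposal takes essentially the same route as the paper: part (ii) from the monotonicity of $\lambda(\mathcal{P}_n)$ under set intersection together with the RSF recursion and the Comparison Lemma~\ref{lem:comp}, and part (i) from the contrapositive of Theorem~\ref{thm:PALearn}, bounding every learning-phase state by a two-step worst case $Q^{1:2}$ started from $x_0$ or from a state of magnitude at most $p\eta$, with the ARSF tail absorbed via $\lambda^* < 1-\frac{1}{p}$. The two obstacles you flag --- that the state produced at the switching instant can be a \emph{three}-step excursion from the last small state (since the final $u^{PAL}$ action is applied to an already-large $x_{N-1}$), and that monotonicity of $Q^{1:2}$ in the uncertainty set is nontrivial when the $u^{PAL}$ gains themselves depend on the evolving $\mathcal{P}_n$ --- are precisely the steps the paper's own proof dispatches with ``it is easy to see,'' so your sketch is, if anything, more candid than the paper about where rigor is still owed.
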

\begin{proof}
\eqref{it:bound2} follows directly from our analysis in Sec.\eqref{sec:ARSF}: Since $\lambda(\mathcal{P}_{n'}) \leq \lambda^* <1-1/p<1$, the parameter uncertainty set is strongly stabilizable and will stay so for all $n\geq n'$. Therefore, the convergence bounds from Sec.\eqref{sec:ARSF} apply.\\
To establish \eqref{it:bound1}, we have to consider different worst-case scenarios. First, assume $\lambda(\mathcal{P}_0) \leq \lambda^*$ holds, then we get the bound $\left\| x\right\|_{\infty} \leq |x_0| \vee p\eta$ from part \eqref{it:bound2}. On the other hand, if $\lambda(\mathcal{P}_0) > \lambda^*$, then the system is being controlled by $u^{PAL}$. Recall that under that regime we have guaranteed that if $x_{n'-1},x_{n'-2} \leq p \eta$, we obtain a strongly stabilizable uncertainty set for $n'$, i.e. $\lambda(\mathcal{P}_{n'}) \leq \lambda^*$. Therefore, for all times after $n'$ we will apply $u^{ARSF}$ and by our previous discussion in \eqref{it:bound2}, we obtain the bound $\left\|x_{n':\infty} \right\|_{\infty} \leq |x_{n'}| \vee p\eta$. It is therefore, left to consider the worst case $\left\|x_{0:n'}\right\|_{\infty}$, where $n'-2$ is the first time that $x$ grows outside the interval $\left[-p\eta,+p\eta \right]$. It is easy to see that we can bound the worst case transient for $\left\|x_{0:n'}\right\|_{\infty}$, by considering the two time-step worst-case transient starting from an initial condition $x_0 \geq p\eta$ and no prior knowledge of the system. Stating that transient bound in terms of the definitions in Sec. \eqref{sec:prob}, we get 
\begin{align*}
\left\|x_{0:n'}\right\|_{\infty} \leq  \max_{v \in \left\{ x_0, p\eta  \right\}} |v|\vee Q^{1:2}\left(v,\mathcal{P}_0,u^{PAL}_{0:1}\right)
\end{align*} 
\end{proof}
The ``passive-aggressive'' nomenclature is chosen because the policy is such that if the system parameters and process noise are stabilizing (i.e., keep the state close to the origin), then we only focus on learning the uncertainty set via the learning control policy \eqref{eq:learnu} -- this is the passive phase of the control policy.  In particular, if for the specific process noise and system parameter realizations no such $n_0$ exists, then by definition we are guaranteed to have $|x_n| \leq C\eta\vee |x_0|$ for all $n$, where $C$ is a constant depending only on $p$, $\lambda^*$ and $k_{max}(\mathcal{P}_n)$ -- this follows by noting from \eqref{cor:Vsym} that $Q^{1:2}\left(v,\mathcal{P}_0,u^{PAL}_{0:1}\right)$ grows sublinear in $|v|$. In contrast, when the noise is such that the state is pushed sufficiently far from the origin, we are able to aggressively decrease the stability margin of the uncertainty set and switch to an ARSF policy.  The result is a stabilizing control scheme that is applicable to arbitrary initial uncertainty sets $\mathcal{P}_0$. 

\section{Simulation Results}
\label{sec:simulations}
In the following we show how the passive-aggressive controller performs under different scenarios. Recall, that our plant is modeled as
\begin{align}
\label{eq:dyn2} x_{n+1} &= a x_n + b u_n + w_n\\
&\left|w_n\right| \leq \eta\,\,\,\begin{bmatrix}a\\b \end{bmatrix} \in \mathcal{P}_0
\end{align}
We set the initial uncertainty set to be
\begin{align}
\mathcal{P}_0 = \left\{\left.\begin{bmatrix} a\\b\end{bmatrix} \right| -3\leq a \leq 3,\,\, 0.1\leq b \leq 3 \right\}
\end{align}
and we pick the true parameters of the system to be
\begin{align}
a_0 &= 2 & b_0 &=0.5.
\end{align}
The $u^{SRSF}$ controller is parametrized with $p=10$ and $\lambda^* = 0.5$.
Notice that this initial uncertainty set is not strongly stabilizable. In what follows, we apply the controller described in Theorem \ref{thm:PALC} for different noise and system parameter realizations: fixed system parameters and adversarial/random noise, adversarial system parameters and noise, and fixed parameters with no noise.

Each of the figures \eqref{fig:rnd}-\eqref{fig:nonoise} show sequences of the state $x_n$ and control action $u_n$ and the maximum and minimum feasible $a$ and $b$ of the current polytope $\mathcal{P}_n$. The right subfigures overlay the area of all polytopes $\mathcal{P}_n$ and display how the uncertainty polytopes $\mathcal{P}_n$ shrink with each iteration. The shade of the polytopes becomes lighter with increasing $n$.

For the simulations with adversarial noise, we choose $w_k = \mathrm{sign}(ax_k+bu_k)$ which is easily seen to be the solution to the inner maximization problem in equation \eqref{thm:minmax} with the surrogate function $V_{RSF}$ to determine the adversarial noise and system parameters.

Notice that $\lambda(\mathcal{P}_n)$ decreases monotonically, and in presence of noise the controller learns to stabilize the system within two time-steps. It is also worth noticing that in presence of no noise, the controller still chooses to perturb the system on purpose to gather information, i.e. an exploration phase naturally emerges to better identify the system parameters before a robustly stabilizing control policy is applied.

\begin{figure}[ht]
\centering
\subcaptionbox{}{\includegraphics[width=0.5\textwidth]{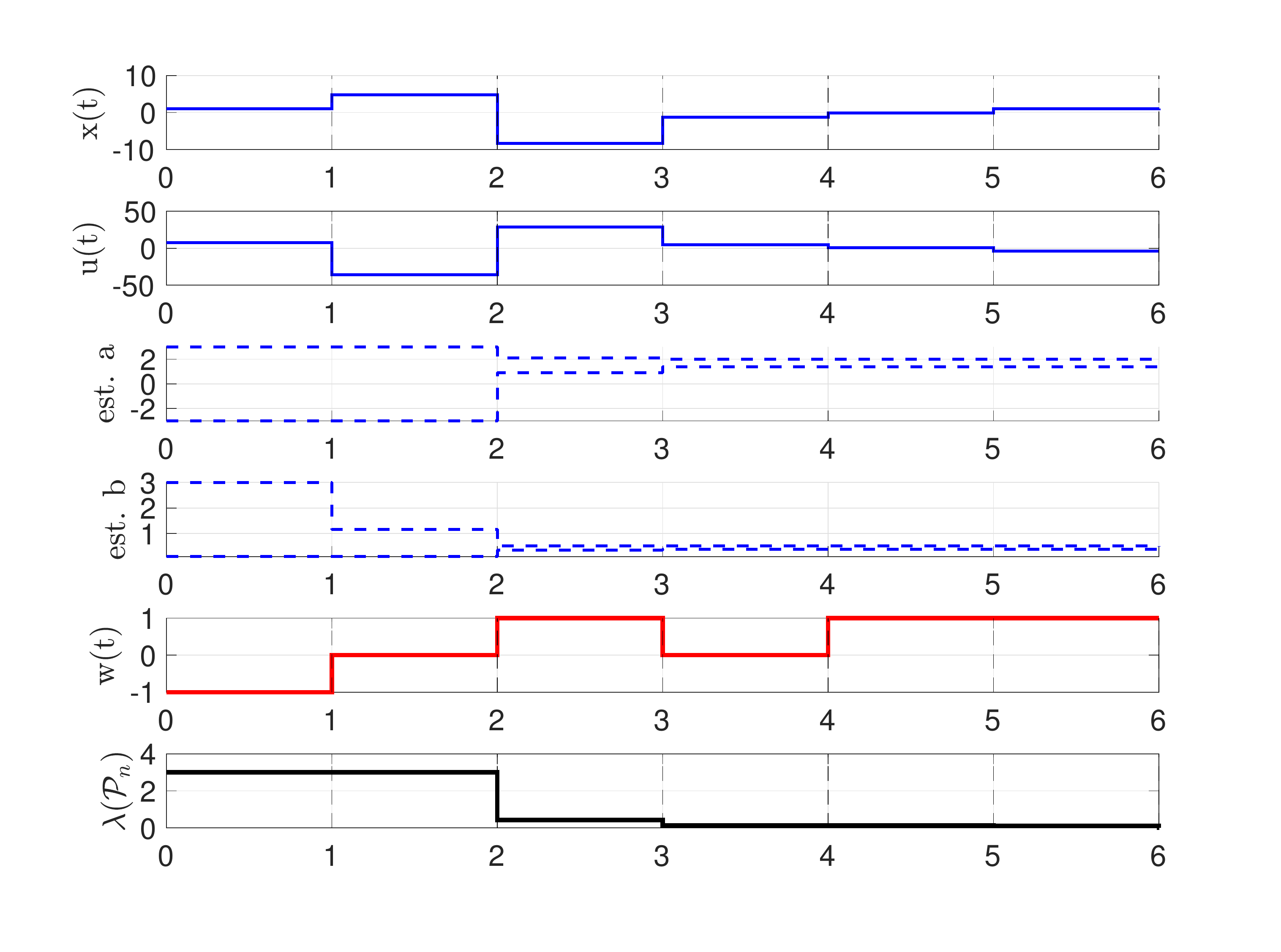}}
\subcaptionbox{}{\includegraphics[width=0.5\textwidth]{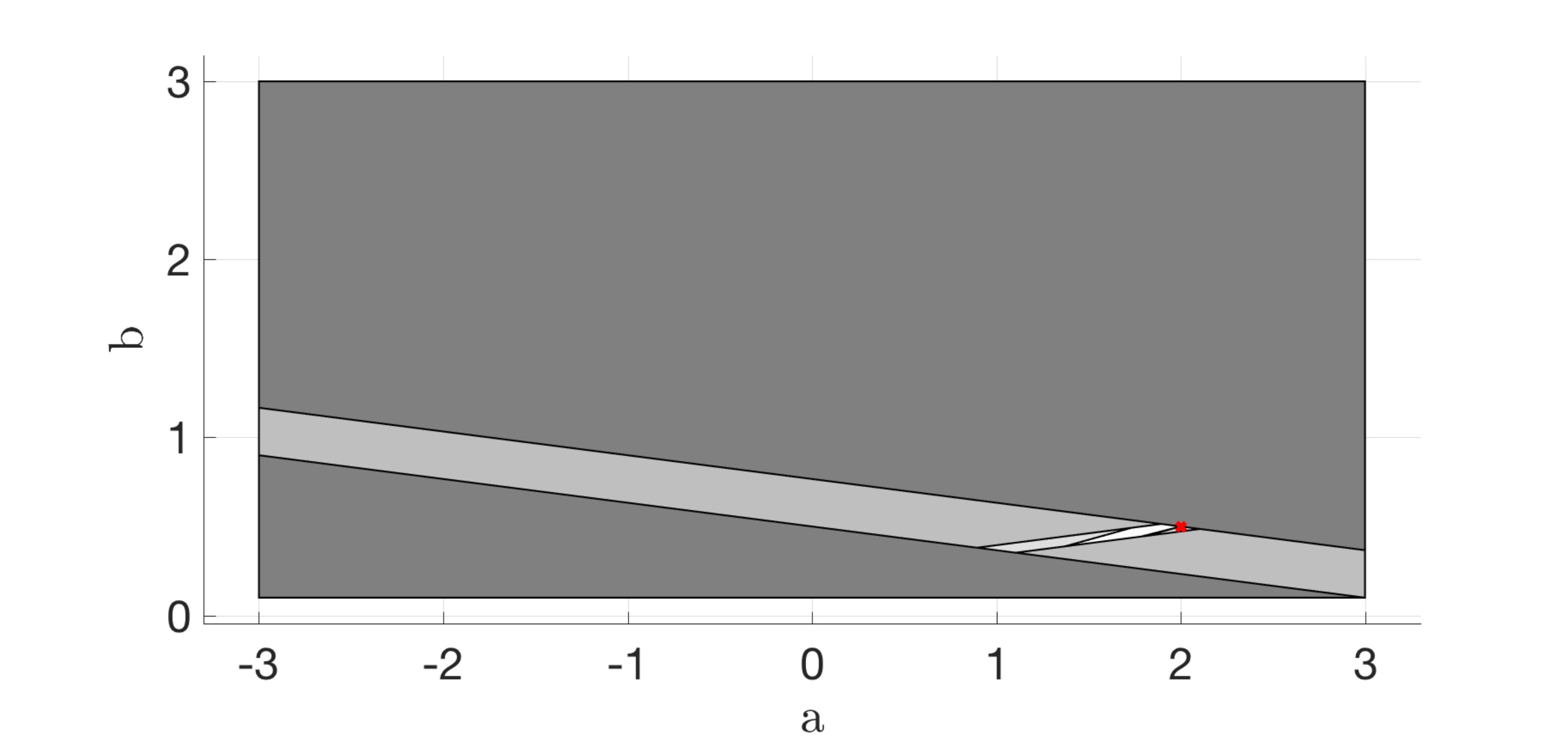}}
\caption{$x_0=1$, $a_0=2$, $b_0=0.5$, $\eta=1$, $\lambda^* = 0.5$, random noise }
\label{fig:rnd}
\end{figure}

\begin{figure}[ht]
\centering
\subcaptionbox{}{\includegraphics[width=0.5\textwidth]{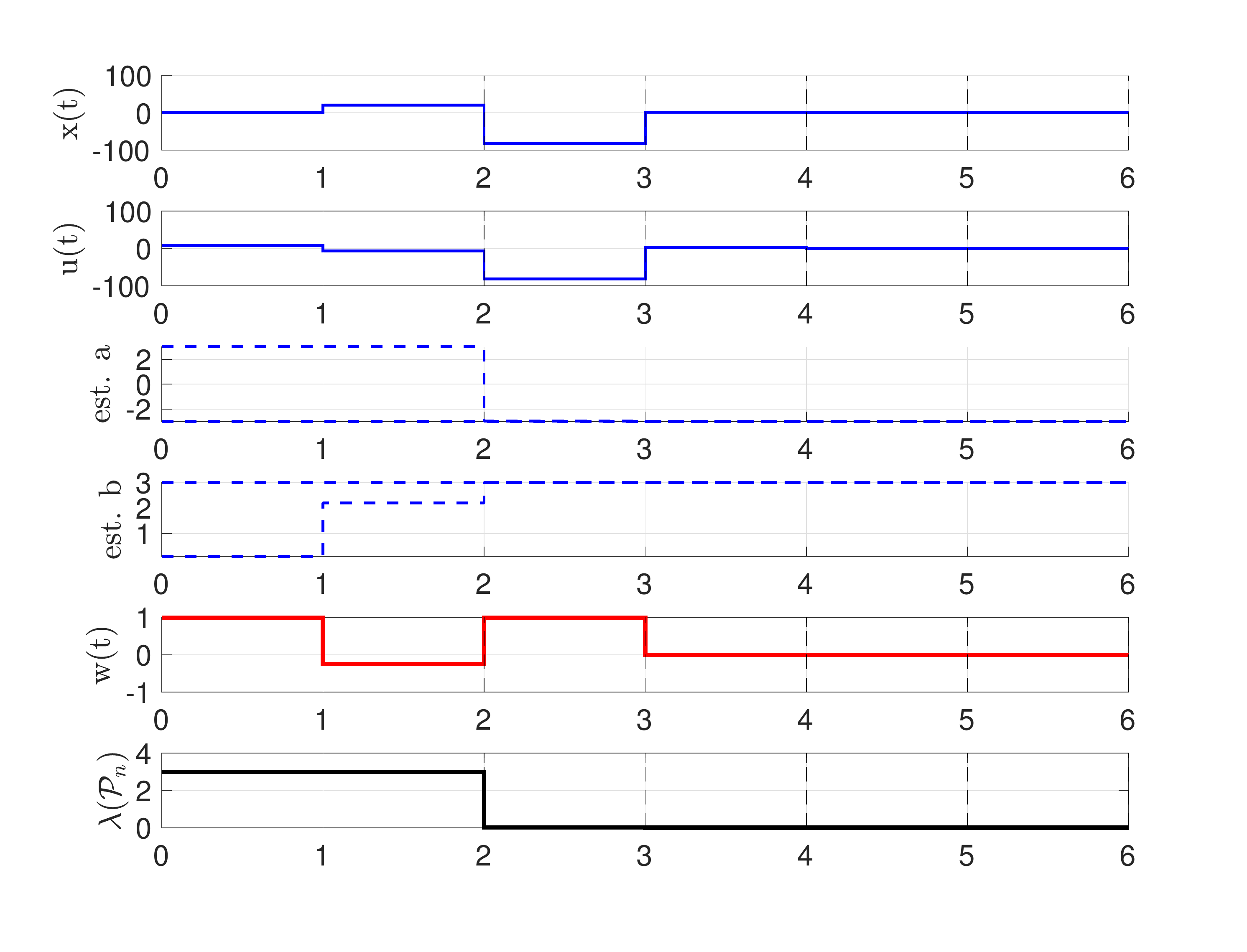}}
\subcaptionbox{}{\includegraphics[width=0.5\textwidth]{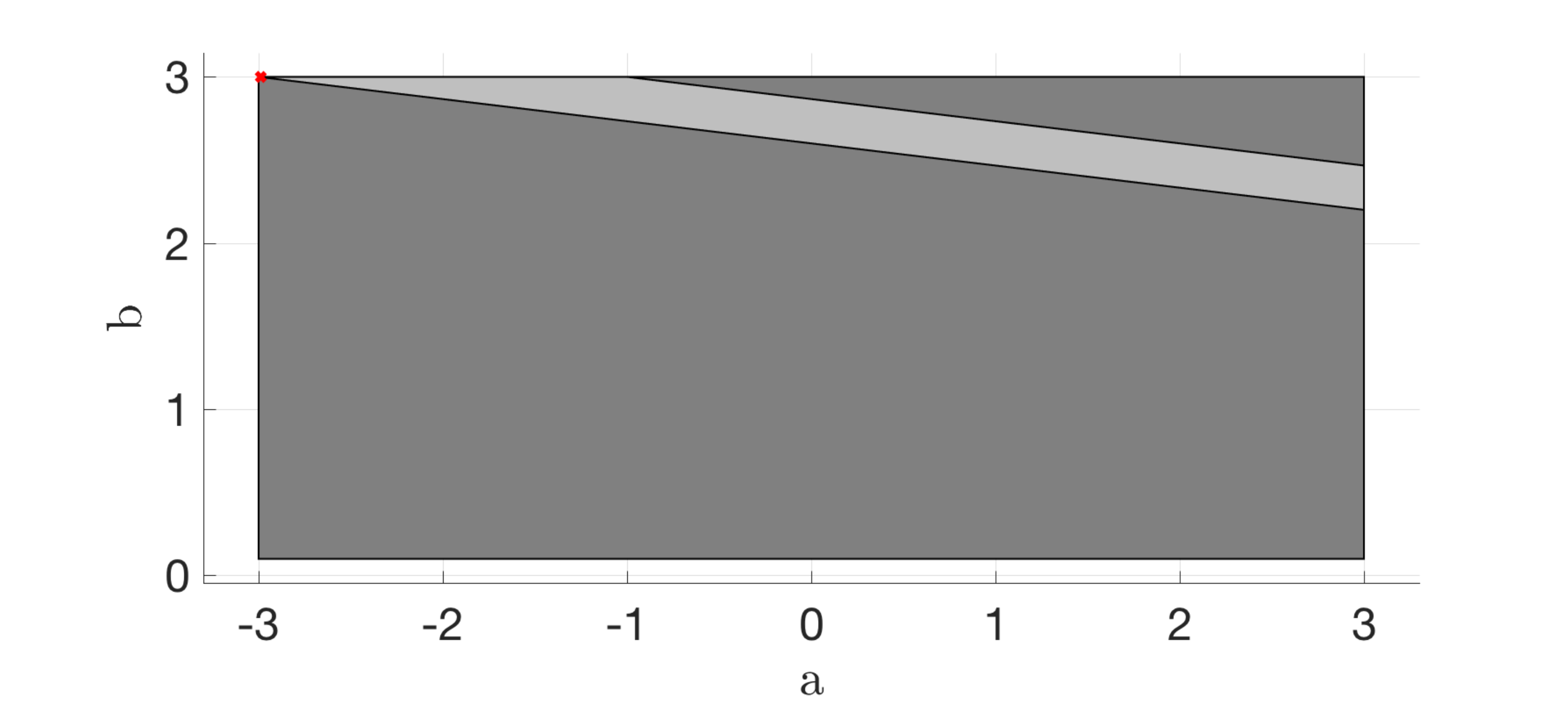}}
\caption{$x_0=1$, $a_0=3$, $b_0=3$, $\eta=1$, $\lambda^* = 0.5$, adversarial noise and system }
\label{fig:advnoisesys}
\end{figure}

\begin{figure}[ht]
\centering
\subcaptionbox{}{\includegraphics[width=0.5\textwidth]{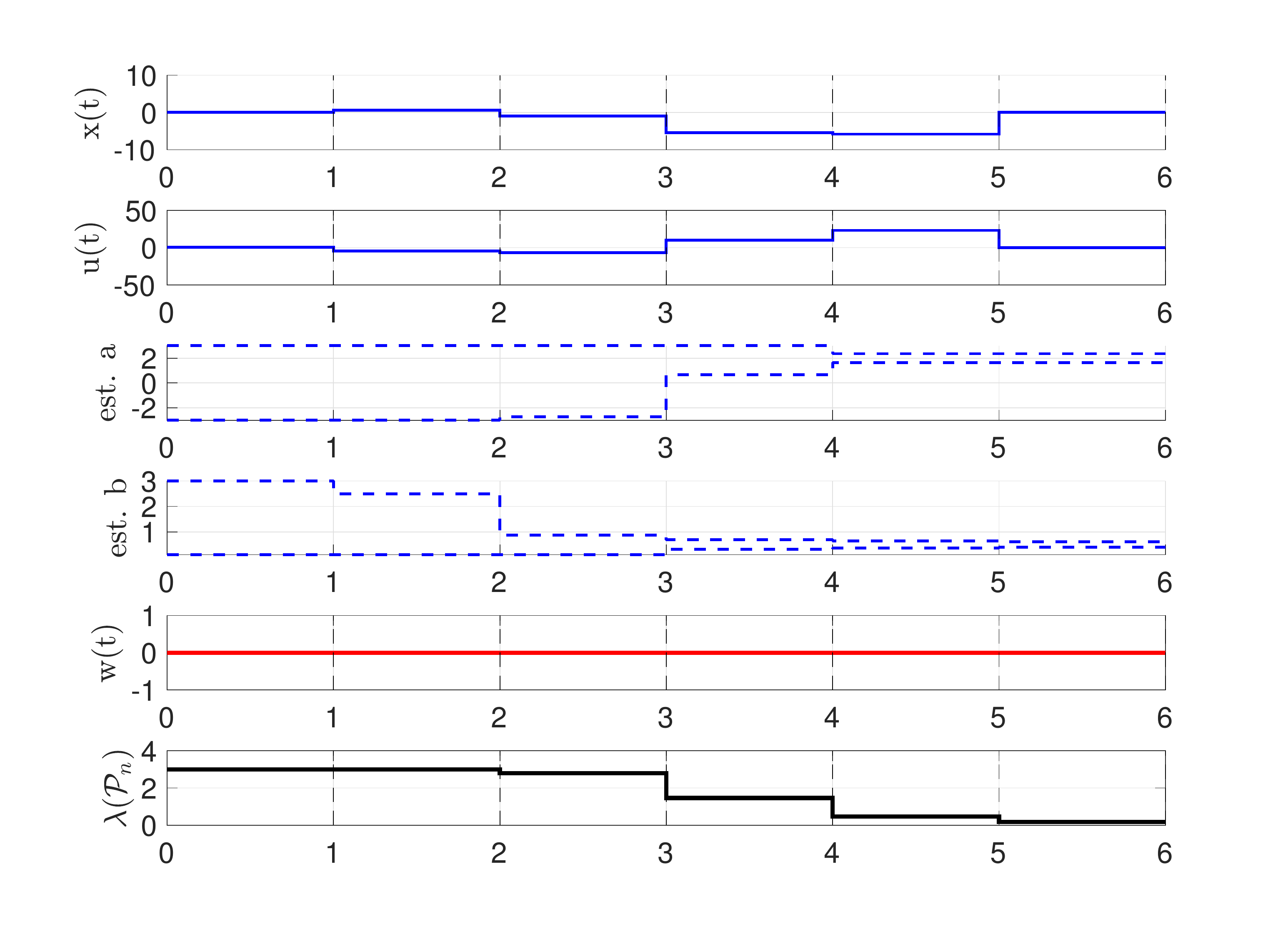}}
\subcaptionbox{}{\includegraphics[width=0.5\textwidth]{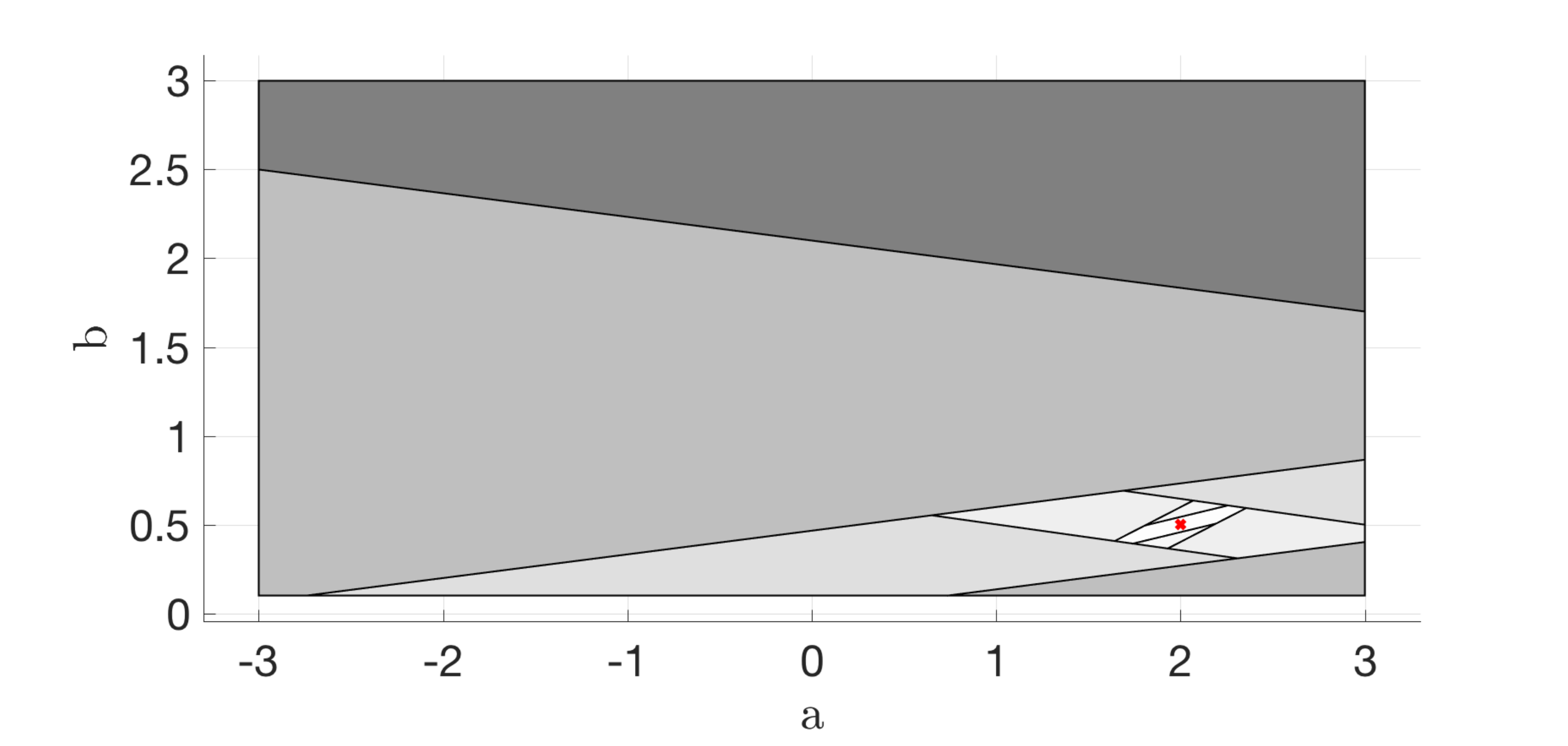}}
\caption{$x_0=0.1$, $a_0=2$, $b_0=0.5$, $\eta=1$, $\lambda^* = 0.5$, no noise }
\label{fig:nonoise}
\end{figure}

\section{Conclusions and Future Work}
In this paper we defined and analyzed the passive-aggressive learning and control strategy for scalar systems with bounded but adversarial process noise and parametric uncertainty.  We showed that for strongly stabilizable initial uncertainty sets, sharp bounds on the state-deviation can be obtained using an ARSF control policy.  We then extended these results to the general setting by proposing a two-stage controller: the first stage seeks to passively learn the system so long as the state remains sufficiently close to the origin.  However, if the process and system noise are such that the state is pushed sufficiently far from the origin, the controller is able to aggressively reduce the uncertainty set to one that is strongly stabilizable, thus allowing for either the weakly or strongly ARSF policies to be applied.  Future work will look to actively inject noise into the passive stage of the aforementioned two-stage control policy to expedite the learning process, as well as characterize sharp regret bounds on the proposed policy.  Of additional interest is the extension of the proposed methods to the vector valued setting.

\appendices
\section{Stability Margin Bounds}\label{sec:box}
\subsection{Box-shaped Uncertainty Sets}
\begin{lem}\label{lem:box}
Let $\mathcal{B}$ be a controllable boxed uncertainty 
\begin{align*}
\mathcal{B} &= \left\{\left.\begin{bmatrix} a\\b \end{bmatrix} \right| \begin{array}{c} l_a \leq a \leq u_a\\ 0<l_b \leq b \leq u_b \end{array} \right\} 
\end{align*}
and define $\Delta_b \mathcal{B}$, $\Delta_a \mathcal{B}$, $a_{av}$, $b_{av}$, $k_{av}$ as
\begin{align*}
a_{av} = (u_a+l_a)/2 && b_{av} = (u_b+l_b)/2\\
\Delta_a \mathcal{B} = (u_a-l_a)/2 && \Delta_b \mathcal{B} = (u_b-l_b)/2\\
&k_{av} = -\frac{a_{av}}{b_{av}}
\end{align*}
where $a_{av}$, $b_{av}$ is the average system of the box and $k_{av}$ the corresponding deadbeat feedback. Then the stability margin of $\mathcal{B}$ can be computed as
\begin{align*}
\lambda(\mathcal{B}) = |k_{av}|\Delta_b \mathcal{B}+\Delta_a \mathcal{B}
\end{align*}
\end{lem}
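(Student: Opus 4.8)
The plan is to reduce the min--max defining $\lambda(\mathcal{B})$ to a one-dimensional piecewise-linear optimization over the gain $k$, exploiting that $a+bk$ is affine in $(a,b)$ so that the inner maximum over the box is attained at a vertex and admits a closed form. First I would fix $k$ and evaluate the inner maximum. Writing $a = a_{av}+\delta_a$ and $b = b_{av}+\delta_b$ with $|\delta_a|\le \Delta_a\mathcal{B}$ and $|\delta_b|\le \Delta_b\mathcal{B}$, the quantity $a+bk = (a_{av}+b_{av}k) + (\delta_a + k\delta_b)$ is affine in $(\delta_a,\delta_b)$ over a box, so its absolute value is maximized at a corner, giving
\[
\max_{[a,b]^T\in\mathcal{B}} |a+bk| = |a_{av}+b_{av}k| + \Delta_a\mathcal{B} + |k|\,\Delta_b\mathcal{B}.
\]

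Next I would minimize this expression over $k$. Since $\Delta_a\mathcal{B}$ is constant and $b_{av}>0$, using $a_{av}+b_{av}k = b_{av}(k-k_{av})$ the objective becomes $\Delta_a\mathcal{B} + h(k)$ with $h(k) = b_{av}\,|k-k_{av}| + \Delta_b\mathcal{B}\,|k|$. This is a convex piecewise-linear function with breakpoints at $k=k_{av}$ and $k=0$, i.e. a weighted sum of distances to those two points with weights $b_{av}$ and $\Delta_b\mathcal{B}$. The key structural fact is that controllability forces $l_b>0$, hence $b_{av}-\Delta_b\mathcal{B} = l_b > 0$, so the weight $b_{av}$ at $k_{av}$ strictly dominates the weight $\Delta_b\mathcal{B}$ at $0$. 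A weighted-median argument then places the minimizer exactly at $k_{av}$: checking the slope on each of the three regions $(-\infty,\min\{0,k_{av}\})$, the middle interval, and $(\max\{0,k_{av}\},\infty)$ shows the slope is negative before $k_{av}$ and positive after it. Evaluating at $k=k_{av}$ gives $h(k_{av}) = \Delta_b\mathcal{B}\,|k_{av}|$, so that
\[
\lambda(\mathcal{B}) = \Delta_a\mathcal{B} + |k_{av}|\,\Delta_b\mathcal{B},
\]
as claimed, and as a byproduct $K(\mathcal{B}) = k_{av}$, the deadbeat gain of the average system.

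The main obstacle is this second step---verifying that the minimizer of $h$ is precisely $k_{av}$ rather than $0$ or some interior breakpoint configuration. This hinges entirely on the strict inequality $b_{av}>\Delta_b\mathcal{B}$ coming from $l_b>0$; without controllability the two weights could tie or reverse, shifting the optimizer and changing the formula. The sign bookkeeping across the three possible orderings of the breakpoints ($k_{av}$ positive, negative, or zero, equivalently $a_{av}$ negative, positive, or zero) is routine but must be carried out to confirm optimality of $k_{av}$ in every case.
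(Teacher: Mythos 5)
Your proof is correct and takes exactly the route the paper intends: the paper omits its own proof of this lemma, stating only that it ``follows by solving'' the min--max problem $\min_k \max_{\left[a,b\right]^T\in\mathcal{B}}|a+bk|$, which is precisely the problem you solve. Your two steps --- reducing the inner maximum over the box to $|a_{av}+b_{av}k| + \Delta_a\mathcal{B} + |k|\Delta_b\mathcal{B}$ via vertex attainment, and then the convex piecewise-linear slope (weighted-median) argument showing the minimizer is $k_{av}$ because controllability gives $b_{av} - \Delta_b\mathcal{B} = l_b > 0$ --- correctly supply the details the paper leaves out.
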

\begin{proof}
The proof is omitted but follows by solving the following optimization problem 
\begin{align*}
\min\limits_{k} \max\limits_{\begin{array}{c} l_a \leq a \leq u_a\\ l_b \leq b \leq u_b \end{array}} \left| a+bk\right|
\end{align*}
\end{proof}
 \begin{figure}[ht]
 \centering
 \includegraphics[width=0.5\textwidth]{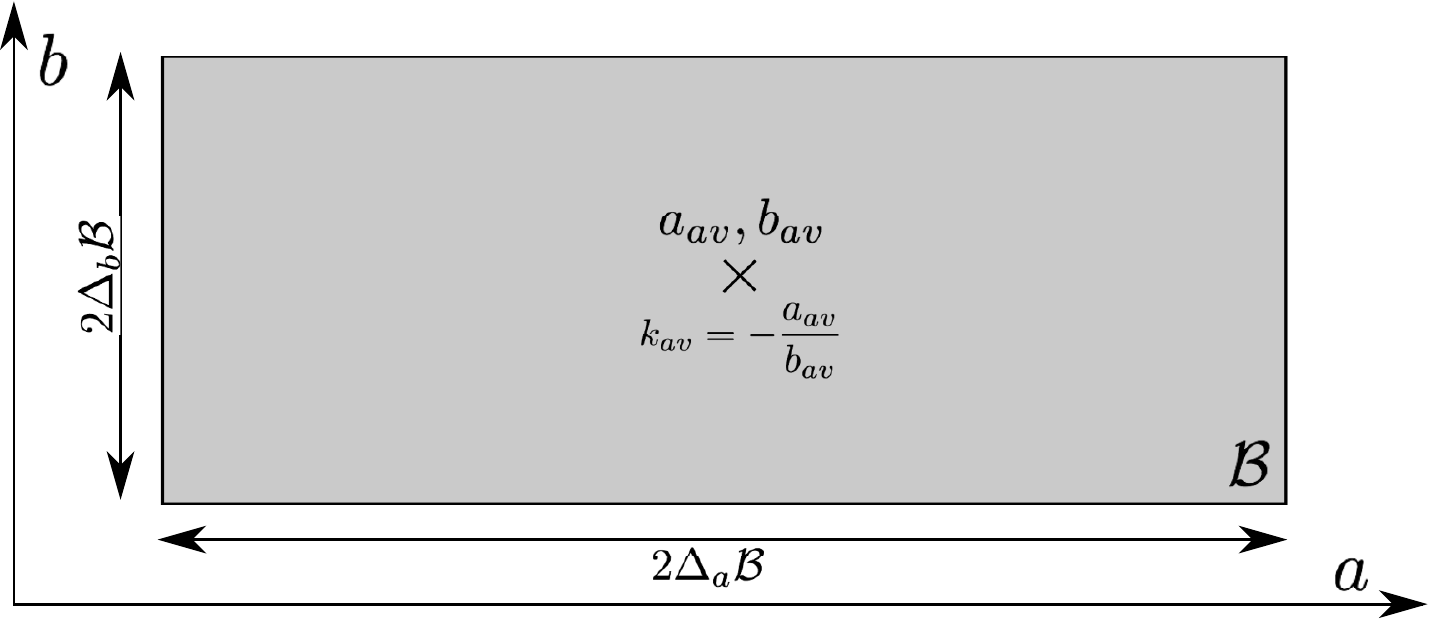}
 \caption{Example of a box uncertainty}
 \label{fig:box}
 \end{figure}

\subsection{Approximation for $S(x_{i:i+2},u_{i:i+1})$ in Passive-Aggressive Learning}
Consider applying $u_1 = k_1x_1$ and $u_2= -k_2 x_2$ as a feedback controller with $k_1>0$, $k_2>0$. Then the resulting uncertainty set $S(x_{i:i+2},u_{i:i+1})$ resembles a parallelogram as shown in Fig.\eqref{fig:parallelogram}. An approximation of the stability margin $\lambda(S(x_{i:i+2},u_{i:i+1}))$ is the stability margin of its outer-bounding box, i.e. $\lambda(\mathcal{B}(x_{i:i+2},u_{i:i+1}))$. Using the notation in Fig.\eqref{fig:parallelogram} and Lem.\eqref{lem:box}, the approximation can be computed as
\begin{align*}
&\lambda(\mathcal{B}(x_{i:i+2},u_{i:i+1}))\\
= &k_{av}(\mathcal{B}(x_{i:i+2},u_{i:i+1}))(x+y) + k_1x +k_2 y
\end{align*}
From simple geometry, notice that the shaded area $A$ can be computed in three ways:
\begin{align}
A&= h_1 \sqrt{1+k^2_1}x = 2\eta\frac{x}{|x_1|}\\
 &= h_2 \sqrt{1+k^2_2}y = 2\eta\frac{y}{|x_2|}\\
 &= (x+y)(k_1x+k_2y)-k_1x^2-k_2y^2
\end{align}
We can use these equations to solve for $x$ and $y$ and finally obtain:
\begin{align}
\notag&\lambda(\mathcal{B}(x_{i:i+2},u_{i:i+1}))\\
\label{eq:k_approx}=&k_{av}\left(\mathcal{B}(\dots)\right)\frac{\frac{\eta}{|x_2|} + \frac{\eta}{|x_1|}}{k_2 + k_1} + \frac{k_2\frac{\eta}{|x_1|}+k_1\frac{\eta}{|x_2|}}{k_1 +k_2}
\end{align}
\begin{figure}[ht]
\centering
\includegraphics[width=0.5\textwidth]{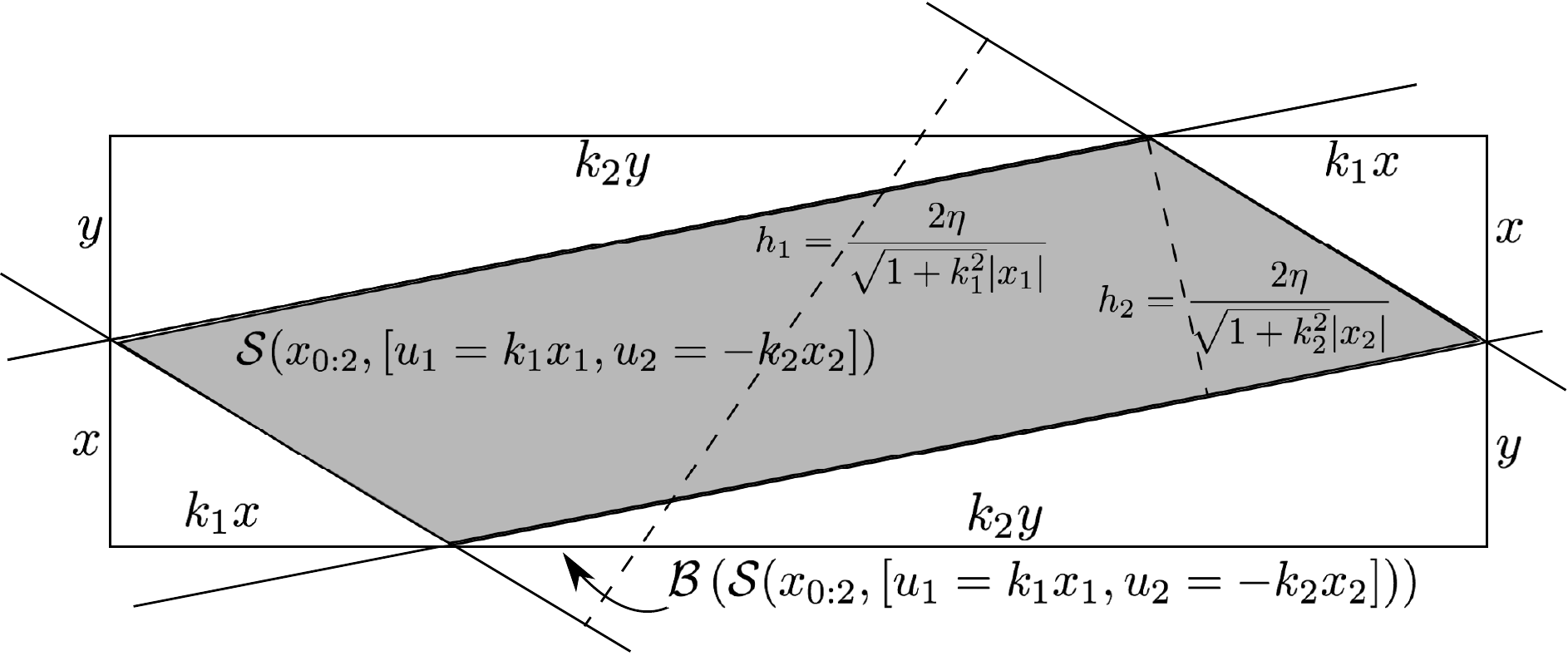}
\caption{Example uncertainty set after two timesteps of the passive-aggressive learner}
\label{fig:parallelogram}
\end{figure}
\bibliographystyle{IEEEtran}
\bibliography{IEEEabrv,conbib}
\end{document}